\documentclass[11pt, reqno]{amsart}
%Submitted to Discrete Mathematics on June 13, 2024
%Revision1 submitted March 11, 2025
%Revision2 submitted April 9, 2025
\usepackage[all,tips]{xy}
\usepackage{latexsym, amsfonts, amsmath, amssymb, mathrsfs, graphicx, xcolor, ytableau, hyperref, float}
\usepackage[justification=centering]{caption}
\allowdisplaybreaks
\usepackage{centernot}
\usepackage{tikz}

\definecolor{red}{rgb}{1,0,0}
\definecolor{magenta}{rgb}{1,0,1}
\definecolor{dartmouthgreen}{rgb}{0.05, 0.5, 0.06}
\definecolor{purple(x11)}{rgb}{0.63,0.36,0.94}
\definecolor{turquoise}{rgb}{0.25, 0.87, 0.81}
\newtheorem{theorem}{Theorem}[section]
\newtheorem{lemma}[theorem]{Lemma}
\newtheorem{proposition}[theorem]{Proposition}
\newtheorem{corollary}[theorem]{Corollary}

\theoremstyle{definition}
\newtheorem{remark}[theorem]{Remark}
\newtheorem{example}[theorem]{Example}

\newcommand{\Log}{\mathrm{Log}}

\newcommand{\Cal}[1]{\ensuremath{\mathcal{#1}}}

\newcommand{\lp}{\left(}
\newcommand{\rp}{\right)}

\usepackage[textheight=8.75in, textwidth=6.75in]{geometry}

\def\C{{\mathbb C}}

\def\N{{\mathbb N}}
\def\Z{{\mathbb Z}}
\def\Q{{\mathbb Q}}

\def\O_K{{\Cal{O}_{K}}}
\def\O_F{{\Cal{O}_{F}}}
\def\N_F{{\Cal{N}_{F/\Q}}}

\def\O_K{{\Cal{O}_{K}}}
\def\O_F{{\Cal{O}_{F}}}
\def\N_F{{\Cal{N}_{F/\Q}}}

\numberwithin{equation}{section}
\numberwithin{theorem}{section}

\title{Distribution of Hooks in Self-Conjugate Partitions}

\author{William Craig, Ken Ono, and Ajit Singh}
\address{Department of Mathematics, United States Naval Academy, 572C Holloway Road
Mail Stop 9E. Annapolis, MD 21402}
\email{wcraig@usna.edu}
\address{Dept. of Mathematics, University of Virginia, Charlottesville, VA 22904}
\email{ko5wk@virginia.edu}
\email{ajit18@iitg.ac.in} 
\begin{document} 
	
 \begin{abstract} We confirm the speculation that the distribution of $t$-hooks among unrestricted integer partitions essentially descends  to self-conjugate partitions. Namely,
 we prove that the number of hooks of length $t$ among the size $n$ self-conjugate partitions is asymptotically normally distributed with mean
 $\mu_t(n)$ and variance $\sigma_t(n)^2$ 
 $$\mu_t(n) \sim \frac{\sqrt{6n}}{\pi} + \frac{3}{\pi^2} - \frac{t}{2}+\frac{\delta_{t}}{4}\ \ \ {\text {\rm  and}}\ \ \  \sigma_t^2(n) \sim \frac{\lp \pi^2 - 6 \rp \sqrt{6n}}{\pi^3},
 $$
 where $\delta_t:=1$ if $t$ is odd and is 0 otherwise.
 \end{abstract}
\maketitle

\section{Introduction and Statement of results}

A {\it partition} of a non-negative integer $n$ is a non-increasing sequence of positive integers $\lp \lambda_1, \dots, \lambda_\ell \rp$ whose terms sum to $n$. We write $\lambda \vdash n$ to denote that $\lambda$ is a partition of $n$. Partitions play an  important role in many areas of mathematics, including combinatorics, geometry, mathematical physics, number theory and representation theory. Here we study the combinatorial statistics of partition hook numbers.

For a partition $\lambda$, integers $j,k \geq 1$ and any cell $\lp j,k \rp$ in the Young diagram of $\lambda$, the corresponding {\it hook number} $h(j,k)$ is the length of the {\it hook} $H(j,k)$ formed with the cell $\lp j,k \rp$ as its upper corner. In terms of the conjugate partition $\lambda' = \lp \lambda_1', \dots, \lambda_r' \rp$, we may write $h(j,k) = \lp \lambda_j - j \rp + \lp \lambda_k' - k \rp + 1$. Below, we see an example demonstrating the computation of hook numbers.
\begin{figure}[H] \centering $$\begin{ytableau} 7&6&4&3&1 \\ 5&4&2&1 \\ 2&1 \end{ytableau}$$
\caption{Hook numbers for the partition $\lambda = \lp 5, 4, 2 \rp$.} \vspace{-0.05in}
\end{figure}

Hook numbers play a significant role in the representation theory of the symmetric group, where the partitions of $n$ capture the irreducible representations of $S_n.$ Indeed, if
$\mathcal H(\lambda)$ is the multiset of hook lengths in $\lambda$ and $\rho_\lambda$ is the irreducible representation of $S_n$ associated with $\lambda$, then the Frame--Thrall--Robinson hook length formula gives the dimension
\begin{align*}
    \mathrm{dim}\lp \rho_\lambda \rp = \dfrac{n!}{\prod_{h \in \mathcal H(\lambda)} h}. 
\end{align*}
Furthermore, hook numbers are prominent in mathematical physics and number theory, For example, we highlight the work of Nekrasov and Okounkov \cite{NekOk} and Westbury \cite{Westbury}, who recognized the deep properties of hook numbers through their extraordinary $q$-series identity
\begin{align} \label{N-O Formula}
    \sum_{\lambda} q^{|\lambda|} \prod_{h \in \mathcal H(\lambda)} \lp 1 - \dfrac{z}{h^2} \rp = \prod_{n \geq 1} \lp 1 - q^n \rp^{z-1}.
\end{align}
Using this formula and its generalizations due to Han \cite{Han}, many connections have been drawn between hook numbers and modular forms, which have led to many interesting results, including theorems about cranks for Ramanujan's partition congruences \cite{GKS} and class numbers of imaginary quadratic fields \cite{OnoSze}.

Establishing combinatorial statistics for partitions is an important and growing field in partition theory (see for instance  \cite{AS,BCOM,BM,GOT,GORT}).  Here we consider the statistics of hook numbers.  Recently, Griffin, Tsai and the second author \cite{GOT} studied the counting function
\begin{align*}
	N_t(\lambda) := \#\{ h \in \mathcal H(\lambda) : h = t \}.
\end{align*}
For example, by enumerating the 11 partitions of 6, one finds that $N_{2,6}$ takes on the value $0$ with probability $\frac{1}{11}$, value $1$ with probability $\frac{4}{11}$, and value $2$ with probability $\frac{6}{11}$, where $N_{t,n}$ is the random variable which takes the value $N_t(\lambda)$ for $\lambda$ a random partition of $n$. Then \cite{GOT} proved the following theorem.

\begin{theorem}[{\cite[Theorem 1.1]{GOT}}] \label{T: GOT Theorem}
	For $t \geq 1$ an integer, the function $N_{t,n}$ has an asymptotically normal distribution as $n \to \infty$ with mean asymptotic to $\frac{\sqrt{6n}}{\pi} - \frac t2$ and variance asymptotic to $\frac{\lp \pi^2 - 6 \rp \sqrt{6n}}{2\pi^3}$. 
\end{theorem}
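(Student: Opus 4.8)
The plan is to encode the statistic in a two-variable generating function, extract the asymptotics of its coefficients by a Wright-type circle method, and then read off the mean, the variance, and the vanishing of the higher cumulants. \textbf{Step 1 (generating function).} The key identity to establish is
\begin{equation}\label{eq:GOT-gf}
G_{t}(q,x)\;:=\;\sum_{\lambda}q^{|\lambda|}x^{N_{t}(\lambda)}\;=\;\frac{1}{(q;q)_{\infty}}\prod_{n\geq1}\bigl(1+(x-1)q^{tn}\bigr)^{t}.
\end{equation}
This follows from the $t$-core/$t$-quotient bijection $\lambda\leftrightarrow\bigl(\mu,(\nu^{(0)},\dots,\nu^{(t-1)})\bigr)$, under which $|\lambda|=|\mu|+t\sum_{r}|\nu^{(r)}|$. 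Translating $N_{t}$ onto the abacus, a cell of hook length exactly $t$ corresponds to a bead with a vacant position immediately above it on its runner, and the number of such configurations on runner $r$ depends only on $\nu^{(r)}$, equalling the number $\operatorname{dist}(\nu^{(r)})$ of distinct part sizes of $\nu^{(r)}$; since a $t$-core has no $t$-hook, $N_{t}(\lambda)=\sum_{r=0}^{t-1}\operatorname{dist}(\nu^{(r)})$. As the generating function for $t$-cores is $(q^{t};q^{t})_{\infty}^{t}/(q;q)_{\infty}$ and $\sum_{\nu}q^{|\nu|}x^{\operatorname{dist}(\nu)}=\prod_{n\geq1}\frac{1+(x-1)q^{n}}{1-q^{n}}$, substituting $q\mapsto q^{t}$ in the latter, taking the $t$-th power, and multiplying by the $t$-core factor yields \eqref{eq:GOT-gf}, the powers of $(q^{t};q^{t})_{\infty}$ cancelling. (As a check, $[q^{6}]G_{2}(q,x)=6x^{2}+4x+1$, recovering the distribution of $N_{2,6}$ quoted above.)

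\textbf{Step 2 (coefficient asymptotics).} Since $\mathbb{E}[x^{N_{t,n}}]=[q^{n}]G_{t}(q,x)/p(n)$, it suffices to understand $[q^{n}]G_{t}(e^{-z},e^{s})$ as $z\to0^{+}$ with $s$ small. The modular transformation of Euler's function gives $\log(q;q)_{\infty}^{-1}=\frac{\pi^{2}}{6z}+\tfrac12\log\frac{z}{2\pi}+O(z)$, and Euler--Maclaurin summation gives, uniformly for bounded $s$,
\begin{equation*}
t\sum_{n\geq1}\log\bigl(1+(e^{s}-1)q^{tn}\bigr)\;=\;\frac{-\operatorname{Li}_{2}(1-e^{s})}{z}\;-\;\frac{ts}{2}\;+\;O(z).
\end{equation*}
Hence $G_{t}(e^{-z},e^{s})=B(z,s)\,\exp\!\bigl(A(s)/z\bigr)$ with $A(s):=\frac{\pi^{2}}{6}-\operatorname{Li}_{2}(1-e^{s})$ and $B$ sub-exponential. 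A saddle-point evaluation of $[q^{n}]G_{t}(q,e^{s})=\frac{1}{2\pi i}\oint G_{t}(q,e^{s})\,q^{-n-1}\,dq$, with saddle point $z_{\ast}=\sqrt{A(s)/n}$, then gives
\begin{equation*}
\log[q^{n}]G_{t}(e^{-z},e^{s})\;=\;2\sqrt{A(s)\,n}\;-\;\frac{ts}{2}\;+\;O(\log n),
\end{equation*}
and --- this is the part needing care --- the estimate holds uniformly for $s$ in a shrinking window such as $|s|\le n^{-1/4}\log n$, with the error in the difference used below being $o(1)$.

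\textbf{Step 3 (mean, variance, CLT).} Subtracting the value at $s=0$ (where $A(0)=\pi^{2}/6$), the sub-exponential prefactors cancel to the required order and
\begin{equation*}
\log\mathbb{E}[e^{sN_{t,n}}]\;=\;2\sqrt{n}\Bigl(\sqrt{A(s)}-\sqrt{A(0)}\Bigr)-\frac{ts}{2}+o(1).
\end{equation*}
Using $A(s)=\frac{\pi^{2}}{6}+s+\frac14 s^{2}+O(s^{3})$, hence $\sqrt{A(s)}=\frac{\pi}{\sqrt6}\bigl(1+\frac{3}{\pi^{2}}s+\frac{3(\pi^{2}-6)}{4\pi^{4}}s^{2}+O(s^{3})\bigr)$, this becomes
\begin{equation*}
\log\mathbb{E}[e^{sN_{t,n}}]\;=\;\Bigl(\frac{\sqrt{6n}}{\pi}-\frac{t}{2}\Bigr)s\;+\;\frac{(\pi^{2}-6)\sqrt{6n}}{4\pi^{3}}\,s^{2}\;+\;O\bigl(\sqrt{n}\,|s|^{3}\bigr)\;+\;o(1),
\end{equation*}
so $\mathbb{E}[N_{t,n}]\sim\frac{\sqrt{6n}}{\pi}-\frac{t}{2}$ and $\operatorname{Var}(N_{t,n})\sim\frac{(\pi^{2}-6)\sqrt{6n}}{2\pi^{3}}$; write $\mu_{t}(n)$ and $\sigma_{t}(n)$ for these. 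Every cumulant of $N_{t,n}$ is $O(\sqrt n)$, contributed by a single term of the $s$-expansion of $2\sqrt{A(s)n}$, so the $k$-th cumulant of $(N_{t,n}-\mu_{t}(n))/\sigma_{t}(n)$ is $O(\sqrt n\,\sigma_{t}(n)^{-k})=O(n^{(2-k)/4})\to0$ for $k\ge3$; equivalently $\mathbb{E}[e^{s(N_{t,n}-\mu_{t}(n))/\sigma_{t}(n)}]\to e^{s^{2}/2}$ for each fixed real $s$. By Curtiss's theorem (or, to keep the analysis entirely on shrinking intervals, by computing the factorial moments $[q^{n}]\partial_{x}^{k}G_{t}(q,x)|_{x=1}/p(n)$ and applying the method of moments), $(N_{t,n}-\mu_{t}(n))/\sigma_{t}(n)$ converges in distribution to $\mathcal N(0,1)$, which is the claim.

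\textbf{Main obstacle.} Step 1 is purely combinatorial and Step 3 is essentially formal once Step 2 is in hand; the technical core is the \emph{uniform} saddle-point analysis in Step 2. One must bound $|G_{t}(e^{-z},e^{s})|$ on the minor arcs of the circle $|q|=e^{-z_{\ast}}$, and control the full saddle-point error so that, after subtracting the $s=0$ contribution, what remains is genuinely $o(1)$ across the window $|s|\lesssim n^{-1/4}$ --- the surviving main term there is only of size $O(n^{1/4})$, so substantial cancellation must be justified. The presence of the $\operatorname{Li}_{2}$-factor is the one new feature relative to the classical asymptotics of $p(n)$; otherwise the argument follows the circle-method framework of \cite{GOT}.
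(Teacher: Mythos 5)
Your proposal is correct and follows essentially the same route as the cited source \cite{GOT} (this theorem is quoted, not reproved, in the present paper, whose proof of the self-conjugate analogue uses the identical framework): the Han-type product generating function $\prod_{n\geq1}(1+(T-1)q^{tn})^{t}/(q;q)_\infty$, Euler--Maclaurin and saddle-point asymptotics for the two-variable series, and Curtiss's theorem applied to the moment generating function at $s=r/\sigma_t(n)$. Your generating function, the expansion $A(s)=\tfrac{\pi^2}{6}+s+\tfrac{s^2}{4}+O(s^3)$, and the resulting mean and variance all check out.
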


It is natural to ask whether the same phenomenon holds for the restricted partition families. In this paper, we show that this is essentially the case for self-conjugate partitions. To make this precise, for integers $t \geq 1$ we study the arithmetic statistics of $N_t(\lambda)$ considered as a random variable restricted to the class $\mathcal{SC}$ of self-conjugate partitions. Such a study requires the two-variable generating function which simultaneously tracks the size and hook counts of self-conjugate partitions; that is, we require an explicit formula for
\begin{align}
    F_t(T;q) := \sum_{\lambda \in {\mathcal SC}} T^{N_t(\lambda)} q^{|\lambda|} =: \sum_{n \geq 0} \mathrm{sc}_t(n;T) q^n.
\end{align}

By means of the Littlewood bijection for $t$-core partitions, Amdeberhan, Andrews and two of the authors \cite{AAOS} derived such a formula in order to address conjectures of the first author and collaborators \cite{BBCFW,CDH} on the arithmetic of hook counts in self-conjugate partitions.
In this way we obtain the following direct analog of Theorem \ref{T: GOT Theorem}. Throughout this paper, we define 
\begin{equation}
\delta_t:=\begin{cases} 1 \ \ \ \ &{\text {\rm if $t$ is odd,}}\\
0 \ \ \ \ &{\text {\rm otherwise.}}
\end{cases}
\end{equation}

\begin{theorem} \label{T: Main Theorem}
    Let $t \geq 1$ and consider the random variable $\widehat N_{t,n}$ giving the distribution of $N_t(\lambda)$ on the set ${\mathcal SC}(n)$ of self-conjugate partitions of $n$. Then as $n \to \infty$, $\widehat N_{t,n}$ is asymptotically normally distributed with mean $\mu_t(n) = \frac{\sqrt{6n}}{\pi}-\frac{t}{2}+\frac{3}{\pi^2}+\frac{\delta_{t}}{4} +O\left(\frac{1}{\sqrt{n}}\right)$ and variance  $\sigma_t(n)^2 = \frac{(\pi^2-6)\sqrt{6n}}{\pi^3}+\frac{3(\pi^2-12)}{\pi^4}-\frac{\delta_{t}}{8}  + O\left(\frac{1}{\sqrt{n}}\right)$.
\end{theorem}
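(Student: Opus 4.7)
The plan is to prove asymptotic normality by the method of moment generating functions. By the Curtiss continuity theorem, it suffices to show that, for $s$ in a fixed complex neighborhood of $0$,
$$
\Phi_n(s) := \mathbb{E}\!\left[e^{s\widehat N_{t,n}}\right] = \frac{[q^n]\, F_t(e^s;q)}{[q^n]\, F_t(1;q)}
$$
admits an expansion of the form $\log \Phi_n(s) = \mu_t(n)\, s + \tfrac{1}{2}\sigma_t(n)^2 s^2 + o(1)$ uniformly on compact sets, with $\mu_t(n)$ and $\sigma_t(n)^2$ as stated. Convergence of $\Phi_n(s/\sigma_t(n))\, e^{-s\mu_t(n)/\sigma_t(n)}$ to $e^{s^2/2}$ then yields the normal limit along with the first two moment asymptotics.

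The first step is to invoke the explicit product formula for $F_t(T;q)$ proved in \cite{AAOS} via the Littlewood bijection. This bijection decomposes a self-conjugate partition into its $t$-core and $t$-quotient, so that length-$t$ hooks correspond to cells in a small number of components of the quotient. The resulting formula is an eta-like product in which the $T$-dependence is concentrated in finitely many factors. Crucially, the shape of these factors depends on the parity of $t$: for even $t$ the relevant components pair symmetrically across the main diagonal, whereas for odd $t$ a single central component is isolated and carries a ``half-weighted'' factor. This parity split is the structural origin of the $\delta_t$ corrections in both $\mu_t(n)$ and $\sigma_t(n)^2$.

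With the product formula in hand, I would carry out a saddle-point analysis in the spirit of Meinardus' theorem, as in \cite{GOT}. Setting $q = e^{-\tau}$ and applying the modular transformation for the Dedekind eta function (equivalently, Euler--Maclaurin summation), one obtains an expansion
$$
\log F_t(e^s; e^{-\tau}) = \frac{A(s)}{\tau} + B(s) \log \tau + C(s) + O(\tau),
$$
uniform for $s$ in a neighborhood of $0$. Coefficient extraction via the standard saddle at $\tau \asymp n^{-1/2}$, combined with Taylor expansion of $A, B, C$ around $s=0$, then yields the cumulant asymptotics of $\widehat N_{t,n}$, from which the constants $3/\pi^2$, $\delta_t/4$, $3(\pi^2-12)/\pi^4$ and $-\delta_t/8$ emerge.

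The main obstacle will be the careful extraction of these subleading constants and of the $O(n^{-1/2})$ error. The leading $\sqrt{6n}/\pi$ and $(\pi^2-6)\sqrt{6n}/\pi^3$ follow routinely from the dominant singularity at $q=1$, but isolating the exact values of the constant terms requires tracking the transformation constants of the parity-dependent eta factors alongside the second-order saddle correction. Establishing the uniformity in $s$ needed to promote pointwise MGF convergence to a distributional limit is a further technical point, handled by holomorphically continuing the Meinardus-type estimate to a complex neighborhood of $s=0$ and controlling the minor arcs away from the dominant saddle.
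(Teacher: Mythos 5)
Your overall strategy matches the paper's: moment generating functions and Curtiss's theorem, the AAOS product formula for $F_t(T;q)$, and a saddle-point analysis using Euler--Maclaurin/modular-type expansions near $q=1$. However, you have glossed over the central technical obstruction, and your plan as stated would not carry through for odd $t$.

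You describe the generating function as ``an eta-like product in which the $T$-dependence is concentrated in finitely many factors,'' with odd $t$ contributing merely ``a single central component \ldots{} [with] a `half-weighted' factor.'' This is false. For odd $t$ the formula from \cite{AAOS} is
\begin{align*}
F_t(T;q) = (-q;q^2)_\infty \cdot H^*(T;q^t) \cdot \lp (1-T^2)q^{2t};q^{2t} \rp_\infty^{(t-1)/2}, \qquad
H^*(T;q) = \tfrac{1}{2T}\left[ C_T^+\lp\sqrt{1-T^2};-q\rp_\infty + C_T^-\lp-\sqrt{1-T^2};-q\rp_\infty\right],
\end{align*}
so $H^*$ is a \emph{sum} of two distinct infinite products, not a product raised to a fractional power. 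This breaks your proposed plan in two places. First, you cannot directly apply a Meinardus-type transformation or take $\log F_t$ term-by-term, since the logarithm of a sum of products is not a tractable series; one must instead separately estimate $H^*$ and its $q$-derivatives by expanding $H^* = C_T^+F_X + C_T^-F_{-X}$ and combining the two pieces, tracking how the leading terms of $F_{\pm X}$ interact (this is where the constant $\alpha_T^\pm$ and the dilogarithm identity $\mathrm{Li}_2(z)+\mathrm{Li}_2(-z)=\tfrac12\mathrm{Li}_2(z^2)$ enter). Second, your minor-arc control ``away from the dominant saddle'' is not routine for $H^*$: the standard term-by-term bound $|1-az^n|\le \max\{1,|1-az_0^n|\}$ used for infinite products has no analogue for a sum of products. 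The paper resolves this by expanding $\log(\pm X;-z)_\infty$ near an arbitrary odd-order root of unity $\zeta_k^h$, invoking the distribution property of $\mathrm{Li}_2$ to collapse the resulting sum over $k$th roots, and then showing that the dominant exponential growth of $H^*$ on a circle $|z|=e^{-\alpha_0}$ occurs precisely near $z=1$. Without filling in this argument, the contribution of $H^*$ off the major arc is uncontrolled, and the saddle-point extraction is not justified. These are exactly the places where the $\delta_t$ corrections are generated, so they cannot be waved away as bookkeeping.
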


\begin{remark}
It is interesting to note that, in comparison to Theorem \ref{T: GOT Theorem}, the main term of the mean is the same, but the main term of the variance is doubled in the self-conjugate case.
\end{remark}

\begin{example}
    In the case of $t=2$, Theorem \ref{T: Main Theorem} says that as $n \to \infty$ the number of 2-hooks in a random self-conjugate partition is asymptotically normal with mean $\mu_2(n) \sim \frac{\sqrt{6n}}{\pi} - 1 + \frac{3}{\pi^2}$ and variance $\sigma_2(n)^2 \sim \frac{(\pi^2-6) \sqrt{6n}}{\pi^3} + \frac{3(\pi^2-12)}{\pi^4}$. The convergence of this distribution in the case $t=2$ is demonstrated below.
    \begin{center}
    \begin{figure}[h!]
    	\vspace{-5pt}
    	\centering
    	\includegraphics[scale=0.20]{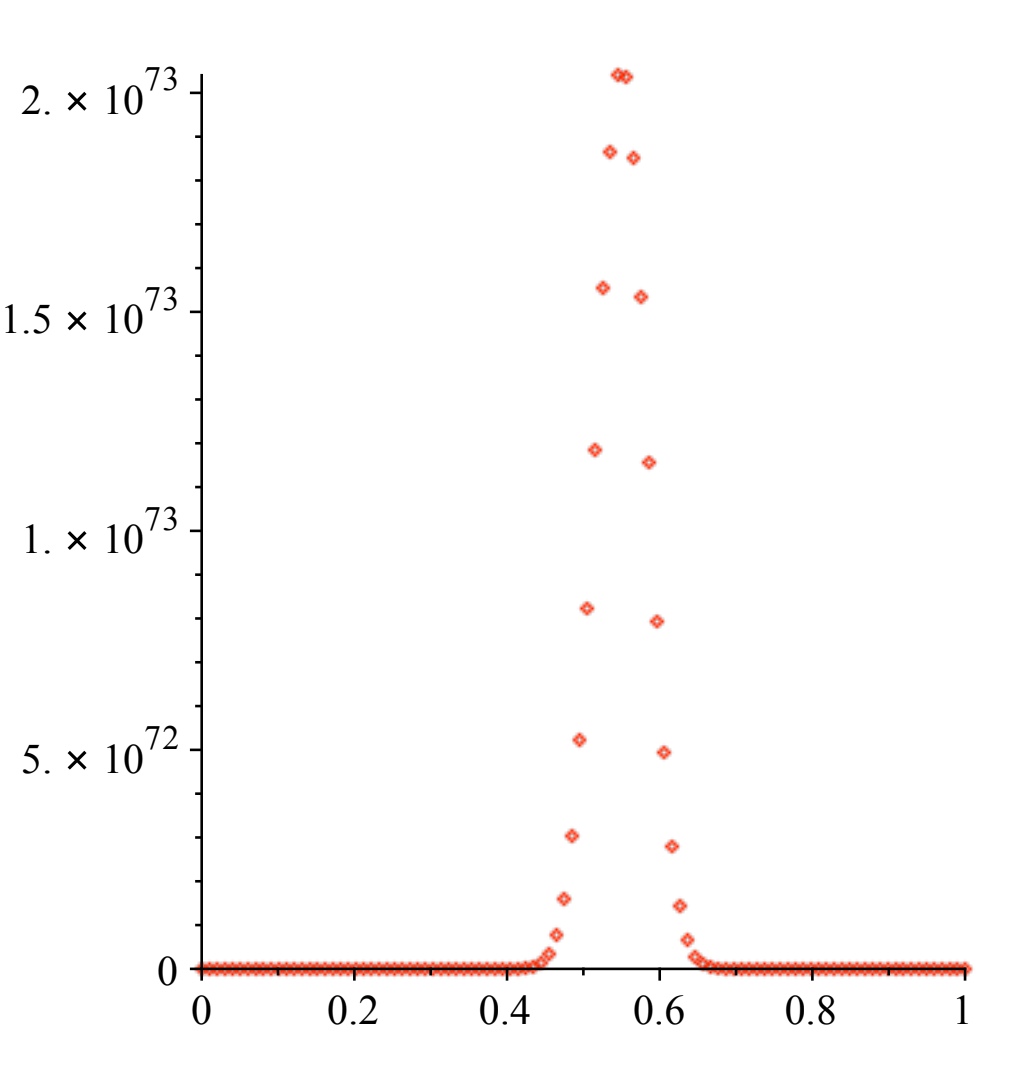}
    	\caption{Renormalized plot of the coefficients of $\mathrm{sc}_2(5000;T)$.}
    	\label{fig:example}
    	\vspace{-0.15in}
    \end{figure}
	\end{center}
	
	\noindent
		Table 1 shows the convergence of the measured means $\mu_2(n)$ to the asymptotic $\mu(n) := \frac{\sqrt{6n}}{\pi}$:
	\begin{table}[h]
		\begin{tabular}{|l|l|l|l|} \hline
			$n$ & $\mu_2(n)$ & $\mu(n)$ & $\mu_2(n)/\mu(n)$ \\ \hline
			100 & 9.17483 & 7.79697 & 1.17672 \\ \hline
			500 & 18.76417 & 17.43455 & 1.07626 \\ \hline
			1000 & 25.97841 & 24.65618 & 1.05363 \\ \hline
			5000 & 56.44511 & 55.13289 & 1.0238 \\ \hline
		\end{tabular}\smallskip
		
		\caption{Data showing the asymptotic $\mu_2(n) \sim \mu(n)$ as $n \to \infty$.}
	\end{table}
\end{example}

The paper is organized as follows. In Section \ref{S: Nuts and Bolts} we collect preliminary definitions and asymptotic lemmas we will need, which will be derived from the Euler--Maclaurin summation formula. In Section~\ref{S: Asymptotics} we apply various asymptotic lemmas and the saddle point method in order to obtain required asymptotics for the polynomials $\mathrm{sc}_t(n;T)$ for ranges of $T$ as $n \to \infty$. Finally, in Section \ref{S: Proof} we filter these asymptotics through the method of moments in order to prove Theorem \ref{T: Main Theorem}.

\section*{Acknowledgements}
	\noindent
	The authors thank the referees for helpful comments that improved the exposition and communication of the manuscript as well as for pointing out a mistake in the original manuscript. The first  author thanks the support of the European Research Council (ERC) under the European Union's Horizon 2020 research and innovation programme (grant agreement No. 101001179) and by the SFB/TRR 191 ``Symplectic Structure in Geometry, Algebra and Dynamics'', funded by the DFG (Projektnummer 281071066 TRR 191). The second author thanks the Thomas Jefferson Fund and grants from the NSF (DMS-2002265 and DMS-2055118). The third author is grateful for the support of a Fulbright Nehru Postdoctoral Fellowship. The views expressed in this article are those of the authors and do not reflect the official policy or position of the U.S. Naval Academy, Department of the Navy, the Department of Defense, or the U.S. Government.

\section{Nuts and Bolts} \label{S: Nuts and Bolts}

\subsection{Generating functions}

Here, we state the formula of \cite{AAOS} for the generating function $F_t(T;q)$. In order to state this formula, we need the notation
\begin{align*}
    \lp a;q \rp_n := \prod_{j=0}^{n-1} \lp 1 - a q^j \rp, \ \ \ n \in \mathbb{N}_0 \cup \{ \infty \}.
\end{align*}
With this notation, we state their result.

\begin{theorem}[{\cite[Theorem 1.1]{AAOS}}] \label{T: Generating function}
    Let $t \geq 1$ be an integer. Then the following are true:
    \begin{enumerate}
        \item If $t$ is even, then we have
        \begin{align*}
            F_t(T;q) = \lp -q;q^2 \rp_\infty \cdot \lp \lp 1 - T^2 \rp q^{2t}; q^{2t} \rp_\infty^{\frac t2}.
        \end{align*}
        \item If $t$ is odd, we have
        \begin{align*}
            F_t(T;q) = \lp -q;q^2 \rp_\infty \cdot H^*(T;q^t) \cdot \lp \lp 1 - T^2 \rp q^{2t}; q^{2t} \rp_\infty^{\frac{t-1}{2}},
        \end{align*}
        where $H^*(T;q)$ is defined by
        \begin{align} \label{E: H^* Definition}
            H^*(T;q) := \dfrac{1}{2T} \left[ C_T^+ \lp \sqrt{1-T^2} ; -q \rp_\infty + C_T^- \lp -\sqrt{1-T^2} ; -q \rp_\infty \right],
        \end{align}
        where for convenience we define the constants
        \begin{align*}
            C_T^\pm := 1 \pm \sqrt{\dfrac{1-T}{1+T}}.
        \end{align*}
    \end{enumerate}
\end{theorem}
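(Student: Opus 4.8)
The plan is to prove Theorem \ref{T: Generating function} combinatorially, via the Littlewood decomposition of self-conjugate partitions through their $t$-cores and $t$-quotients, together with a careful bookkeeping of how hook lengths of size $t$ distribute under this decomposition. Recall that removing all rim hooks of length $t$ from a partition $\lambda$ produces its $t$-core $\mathrm{core}_t(\lambda)$, and the record of which hooks were removed is encoded in the $t$-quotient, an ordered $t$-tuple of partitions $(\lambda^{(0)},\dots,\lambda^{(t-1)})$. The number of cells in $\lambda$ of hook length exactly $t$ equals the number of cells $(j,k)$ with $\lambda_j - j \equiv \lambda_k' - k \pmod t$ adjacent in a suitable sense --- more usefully, via the abacus/beta-set description, $N_t(\lambda)$ equals the number of pairs of consecutive beads-and-gaps on the same runner, which in turn is exactly the number of removable/addable boxes packaged by the first parts of the quotient components. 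The key classical fact I would invoke (or reprove on the abacus) is that $N_t(\lambda) = \sum_{i=0}^{t-1}\left(\text{number of parts of }\lambda^{(i)}\text{ that are }\geq 1\text{ counted with the corner structure}\right)$; more precisely, on each runner the count of length-$t$ hooks equals the number of ``descents'' on that runner, and summing over runners gives $N_t(\lambda) = \sum_{i} (\text{something linear in }\lambda^{(i)})$. I would first nail down this local statement on a single runner.

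Next I would impose self-conjugacy. Under the Littlewood bijection, $\lambda$ is self-conjugate if and only if $\mathrm{core}_t(\lambda)$ is self-conjugate and the $t$-quotient satisfies the symmetry $\lambda^{(i)} = (\lambda^{(t-1-i)})'$ for all $i$ (with the appropriate twist depending on the parity of $t$, and with the middle component $\lambda^{((t-1)/2)}$ self-conjugate when $t$ is odd). This is where the two cases of the theorem split: when $t$ is even the quotient components pair off freely into $t/2$ conjugate pairs, while when $t$ is odd there are $(t-1)/2$ free pairs plus one self-conjugate middle component. For each free conjugate pair $(\lambda^{(i)}, (\lambda^{(i)})')$, I would sum over $\lambda^{(i)}$ the quantity $T^{(\text{hook-}t\text{ contribution of the pair})} q^{t\cdot(|\lambda^{(i)}| + |(\lambda^{(i)})'|)}$; since the hook-$t$ contribution of a box and its conjugate box come in pairs, this naturally produces a factor $T^{2(\cdots)}$, and the sum over all partitions $\mu$ of $T^{2\,\ell(\mu)} x^{|\mu|}$ telescopes (by the standard generating function $\sum_\mu x^{|\mu|} u^{\ell(\mu)} = \prod_{j\ge1}(1-u x^j)^{-1}$, or its finite-difference variant) into a product of the shape $\lp (1-T^2) q^{2t}; q^{2t}\rp_\infty^{-1}$ or its reciprocal --- matching the exponent $t/2$ (resp. $(t-1)/2$) after accounting for all pairs. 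The self-conjugate core contributes the $\lp -q; q^2 \rp_\infty$ factor, since self-conjugate $t$-cores of self-conjugate partitions are enumerated by $\lp -q; q^2\rp_\infty$ after the dilation, which is the generating function for partitions into distinct odd parts (equivalently self-conjugate partitions) with the $t$-core constraint being automatically satisfiable in the right range.

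The genuinely delicate part, and the main obstacle, is the odd middle component when $t$ is odd: here we must sum over \emph{self-conjugate} partitions $\nu$ of $T^{N_t^{\mathrm{loc}}(\nu)} q^{t|\nu|}$, where $N_t^{\mathrm{loc}}(\nu)$ is the local hook-$t$ count, and this is not a simple product --- it is precisely the function $H^*(T;q^t)$ of \eqref{E: H^* Definition}. To identify this sum with $H^*$ I would proceed by a direct manipulation: parametrize self-conjugate $\nu$ by their diagonal hook lengths (the Frobenius symbol with equal arms and legs), translate the hook-$t$ count into a condition on these diagonal hooks modulo $t$, and then recognize the resulting two-variable sum as a specialization of a Durfee-square or Frobenius-symbol identity. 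The factor $\tfrac{1}{2T}$ and the two terms with $C_T^\pm$ and arguments $\pm\sqrt{1-T^2}$ strongly suggest that one should write the answer as a symmetric combination $\tfrac12\big(G(\sqrt{1-T^2}) + G(-\sqrt{1-T^2})\big)$ plus a correction, where $G(w) := \lp w; -q^t\rp_\infty$, reflecting a sign choice in a square root that arises when solving a quadratic recursion on the abacus runner (the $\pm$ corresponds to the two solutions). I would verify the claimed formula by checking it satisfies the same $q$-recursion as the combinatorial sum and agrees at $q=0$ (both equal $1$) and, as a sanity check, at $T=1$ (where $H^*(1;q) = \lp 1; -q\rp_\infty$-type degeneration should recover the count of all self-conjugate partitions with no constraint, i.e. $\lp -q;q^2\rp_\infty$ overall). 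Throughout, the bookkeeping of the dilation $q \mapsto q^t$ versus $q \mapsto q^{2t}$, and of the factor-of-two between a box and its conjugate, is where errors are easiest to make, so I would pin down the even case completely first and then treat odd $t$ as the even case for the paired components plus the new middle-component analysis.
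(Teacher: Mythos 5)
This theorem is not proved in the paper at all: it is quoted verbatim from \cite[Theorem 1.1]{AAOS}, and the only indication of method given is the remark that the formula was derived ``by means of the Littlewood bijection,'' so your overall strategy coincides with that of the source. For the even case your outline is essentially correct, but two points need tightening. First, the statistic carried to each quotient component is not the number of parts $\ell(\mu)$: since the Littlewood bijection divides the hooks of $\lambda$ with length divisible by $t$ by $t$, the $t$-hooks of $\lambda$ correspond to the $1$-hooks of the quotient components, and the number of $1$-hooks of $\mu$ is the number of distinct part sizes (removable corners) $c(\mu)$, which is conjugation-invariant. Each free conjugate pair therefore contributes
\[
\sum_{\mu} T^{2c(\mu)} q^{2t|\mu|} \;=\; \prod_{j \geq 1} \frac{1-(1-T^2)q^{2tj}}{1-q^{2tj}},
\]
which is neither $\lp (1-T^2)q^{2t};q^{2t}\rp_\infty^{-1}$ nor its reciprocal; the denominator $\lp q^{2t};q^{2t}\rp_\infty^{-1}$ per pair must be cancelled against the Garvan--Kim--Stanton generating function for self-conjugate $t$-cores, $\lp -q;q^2\rp_\infty \lp q^{2t};q^{2t}\rp_\infty^{t/2}$ for even $t$, to leave exactly the stated product. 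With those corrections the even case closes.

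The genuine gap is the self-conjugate middle component for odd $t$. Running your argument with the corrected statistic and the odd-$t$ core generating function $\lp -q;q^2\rp_\infty \lp q^{2t};q^{2t}\rp_\infty^{(t-1)/2} \lp -q^t;q^{2t}\rp_\infty^{-1}$ reduces part (2) precisely to the identity
\[
\sum_{\nu \in \mathcal{SC}} T^{N_1(\nu)} q^{|\nu|} \;=\; \lp -q;q^2 \rp_\infty \, H^*(T;q),
\]
i.e.\ to the $t=1$ instance of the theorem itself, since the middle runner carries the $1$-hook count of a self-conjugate partition. Your proposal never establishes this base case: ``recognize the resulting two-variable sum as a specialization of a Durfee-square or Frobenius-symbol identity'' and ``verify the claimed formula by checking it satisfies the same $q$-recursion'' are placeholders --- no recursion is written down, and the emergence of the constants $C_T^\pm$ and the arguments $\pm\sqrt{1-T^2}$ in \eqref{E: H^* Definition} (equivalently, of the $q$-hypergeometric form \eqref{Hypergeometric series}) is exactly the nontrivial content that has to be extracted from such a recursion or from a Frobenius-symbol computation. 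Until that is done, part (2) is unproved. A small additional slip: your $T=1$ sanity check should read $H^*(1;q) = \frac{1}{2}\left[\lp 0;-q\rp_\infty + \lp 0;-q\rp_\infty\right] = 1$, not a ``$\lp 1;-q\rp_\infty$-type'' degeneration, which would vanish identically; the factor $\lp -q^t;q^{2t}\rp_\infty$ accounting for the middle component at $T=1$ comes from the core normalization, not from $H^*$.
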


We note  that $H^*$ can also be represented in terms of $q$-hypergeometric series \cite[Equation 3.1]{AAOS} as
\begin{align} \label{Hypergeometric series}
	H^*(T;q) = \lp 1 - \dfrac{1}{T} \rp \sum_{n \geq 0} \dfrac{(T^2-1)^n q^{2n^2 + n}}{\lp q^2; q^2 \rp_n \lp -q;q^2 \rp_{n+1}} + \dfrac{1}{T} \sum_{n \geq 0} \dfrac{(T^2-1)^n q^{2n^2-n}}{\lp q^2; q^2 \rp_n \lp -q;q^2 \rp_n}.
\end{align}
We primarily use the representation in terms of products because it is more convenient for much of our asymptotic analysis, although there are certain kinds of calculations which are easier in the $q$-hypergeometric form.

\subsection{The dilogarithm function}

We recall the {\it dilogarithm function} $\mathrm{Li}_2\lp z \rp$, which is given by
\begin{align*}
    \mathrm{Li}_2(z) := \sum_{k \geq 1} \dfrac{z^k}{k^2}
\end{align*}
for $|z| < 1$ and elsewhere by the standard analytic continuation having a branch cut on the line $(1,\infty)$. Dilogarithm functions appear frequently when computing asymptotic expansions of infinite products of the general shape $\lp a;q \rp_\infty$. For our calculations, we will need the elementary identity \cite[pg. 9]{ZagDilog}
\begin{align} \label{EQ Dilog ID}
    \mathrm{Li}_2\lp z \rp + \mathrm{Li}_2\lp -z \rp = \dfrac{\mathrm{Li}_2\lp z^2 \rp}{2},
\end{align}
as well as the derivative \cite[pg. 5]{ZagDilog}
\begin{align} \label{eq: Dilog derivative}
	\dfrac{d}{dz} \mathrm{Li}_2\lp z \rp = - \dfrac{\log(1-z)}{z}
\end{align}
and the so-called distribution property \cite[pg. 9]{ZagDilog}
\begin{align} \label{eq: distribution property}
	\mathrm{Li}_2\lp x \rp = n \sum_{z^n = x} \mathrm{Li}_2\lp z \rp.
\end{align}

\subsection{Euler--Maclaurin summation formulas}

As infinite products are often unwieldy to deal with directly, a common technique when analyzing $\lp a;q \rp_\infty$ asymptotically is to first take a logarithm and subsequently use techniques for infinite sums. In particular, we will make use of an asymptotic variation of the Euler--Maclaurin summation formula \cite{BJM, Zag}. To state this result, we will say that a function $f(z)$ satisfies the asymptotic $f(z) \sim \sum_{n \geq n_0} c_n z^n$ as $z \to 0$ if $f(z) - \sum_{n = n_0}^{N - 1 + n_0} c_n z^n = O_N\lp z^N \rp$ for each $N \geq 1$.

\begin{lemma}[{\cite[Theorem 1.2]{BJM}}] \label{L: Asymptotic EM}
	Suppose that $0\le \theta < \frac{\pi}{2}$ and let
	$$D_\theta := \{ re^{i\alpha} : r\ge0 \mbox{ and } |\alpha|\le \theta  \}.$$
    Suppose $f(z)$ is holomorphic in a domain containing $D_\theta$, and that $f$ and all its derivatives are of sufficient decay as $|z| \to \infty$ (i.e. decays at least as quickly as $|z|^{-1-\varepsilon}$ for some $\varepsilon>0$). Suppose also that $f$ has the asymptotic expansion $f(z) \sim \sum_{n \geq 0} c_n z^n$ near $z=0$. Then for any $0 < a \leq 1$, we have as $z \to 0$ that
    \begin{align*}
        \sum_{m \geq 0} f\lp (m+a)z \rp \sim \dfrac{1}{z} I_f - \sum_{n \geq 0} c_n \dfrac{B_{n+1}(a)}{n+1} z^n,
    \end{align*}
    where $I_f := \int_0^\infty f(x) dx$ and $B_n(x)$ are the Bernoulli polynomials.
\end{lemma}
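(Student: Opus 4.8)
The plan is to prove the lemma by the Mellin transform method: represent the left-hand sum as a contour integral built from the Mellin transform of $f$ and the Hurwitz zeta function, then push the contour to the left and read the asymptotic expansion off the residues. To carry this out I would first set $\phi(s) := \int_0^\infty f(x)\,x^{s-1}\,dx$; the decay hypothesis makes this converge and be holomorphic for $0 < \Re(s) < 1+\varepsilon$, with $\phi(1) = I_f$. Inserting $f(x) = \sum_{n=0}^{N-1}c_n x^n + O(x^N)$ into $\int_0^1 f(x)x^{s-1}\,dx$ shows that $\phi$ continues meromorphically to $\Re(s) > -N$ with only simple poles, located at $s = 0,-1,\dots,-(N-1)$, and $\mathrm{Res}_{s=-n}\phi(s) = c_n$; letting $N\to\infty$, $\phi$ is meromorphic on $\C$ with simple poles exactly at the non-positive integers. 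Crucially, since $f$ is holomorphic on a domain \emph{containing} the closed cone $D_\theta$, one may rotate the ray of integration defining $\phi(s)$ to $\arg x = \pm\beta$ with $\beta$ slightly larger than $\theta$, which yields a factor $e^{-\beta\,\Im(s)}$ and hence a bound $\phi(s) = O\big(e^{-(\theta+\eta)|\Im(s)|}\big)$ on each vertical line $\Re(s) = \sigma$, for some fixed $\eta > 0$.

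By Mellin inversion $f(x) = \frac{1}{2\pi i}\int_{(c)}\phi(s)\,x^{-s}\,ds$ for any $1 < c < 1+\varepsilon$, so summing over $x = (m+a)z$ and interchanging (legitimate since $\Re(s) = c > 1$, using the line decay of $\phi$ and $\sum_{m\ge0}(m+a)^{-c} < \infty$) gives
\begin{equation*}
\sum_{m\geq 0} f\lp (m+a)z\rp = \frac{1}{2\pi i}\int_{(c)} \phi(s)\,z^{-s}\Big(\sum_{m\geq 0}(m+a)^{-s}\Big)\,ds = \frac{1}{2\pi i}\int_{(c)} \phi(s)\,\zeta(s,a)\,z^{-s}\,ds ,
\end{equation*}
where $\zeta(s,a)$ is the Hurwitz zeta function (the sum starting at $m=0$ is consistent with $0 < a \leq 1$). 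Now move the contour from $\Re(s) = c$ to $\Re(s) = -N - \tfrac12$. Recalling that $\zeta(s,a)$ is entire in $s$ apart from a simple pole at $s=1$ with residue $1$, and that $\zeta(-n,a) = -\frac{B_{n+1}(a)}{n+1}$ for integers $n \geq 0$, the poles crossed are $s=1$, contributing $\phi(1)\,z^{-1} = I_f/z$, and $s = 0,-1,\dots,-N$, contributing $c_n\,\zeta(-n,a)\,z^n = -c_n\frac{B_{n+1}(a)}{n+1}z^n$. Hence
\begin{equation*}
\sum_{m\geq 0} f\lp (m+a)z\rp = \frac{1}{z}I_f \;-\; \sum_{n=0}^{N} c_n\frac{B_{n+1}(a)}{n+1}z^n \;+\; \frac{1}{2\pi i}\int_{(-N-\frac12)} \phi(s)\,\zeta(s,a)\,z^{-s}\,ds .
\end{equation*}

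Finally one bounds the remaining integral. On the line $\Re(s) = -N-\tfrac12$ we have $|z^{-s}| = |z|^{N+\frac12}\,e^{\arg(z)\,\Im(s)} \leq |z|^{N+\frac12}\,e^{\theta|\Im(s)|}$, while $\zeta(s,a)$ grows at most polynomially in $|\Im(s)|$ and $\phi(s) = O\big(e^{-(\theta+\eta)|\Im(s)|}\big)$; hence the integral is $O\big(|z|^{N+\frac12}\big)$ as $z\to 0$ inside $D_\theta$, and the horizontal segments used in the shift tend to $0$ as $|\Im(s)|\to\infty$ by the same estimate. Since $N$ is arbitrary this is exactly the asserted expansion. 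The one genuinely delicate step — and the main obstacle — is precisely this uniformity over the whole cone: after the inflation by $e^{\theta|\Im(s)|}$ produced by the factor $z^{-s}$, the decay of $\phi(s)$ must still beat the polynomial growth of $\zeta(s,a)$, which is exactly why the hypotheses demand $\theta < \tfrac\pi2$ together with holomorphy of $f$ on a \emph{neighborhood} of $D_\theta$. (A contour-free alternative is to apply the classical Euler--Maclaurin summation formula directly to $g(m) := f((m+a)z)$, let $M\to\infty$ using the decay of $f$ and its derivatives, and expand $\int_0^\infty g$, $g(0)$ and $g^{(2k-1)}(0)$ in powers of $z$ via $f(x)\sim\sum c_n x^n$; the resulting coefficient of $z^n$ works out to $-\tfrac{c_n}{n+1}\big(a^{n+1} - \tfrac{n+1}{2}a^n + \sum_{k\ge1}\binom{n+1}{2k}B_{2k}a^{n+1-2k}\big) = -c_n\frac{B_{n+1}(a)}{n+1}$ by the binomial expansion of the Bernoulli polynomial $B_{n+1}(a)$, with the $K$-th-order remainder of size $O(z^{2K-1})$.)
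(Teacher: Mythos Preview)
The paper does not give its own proof of this lemma; it is quoted verbatim from \cite[Theorem 1.2]{BJM} (see also \cite{Zag}) and used as a black box. So there is nothing in the present paper to compare your argument against.

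That said, your Mellin-transform argument is exactly the approach of the cited sources and is correct in outline: define $\phi(s)=\int_0^\infty f(x)x^{s-1}\,dx$, continue it leftwards using the Taylor expansion of $f$ at $0$ (simple poles at $s=-n$ with residue $c_n$), write the sum as $\tfrac{1}{2\pi i}\int_{(c)}\phi(s)\zeta(s,a)z^{-s}\,ds$, and shift the contour, picking up $I_f/z$ from $s=1$ and $-c_n\tfrac{B_{n+1}(a)}{n+1}z^n$ from $s=-n$ via $\zeta(-n,a)=-\tfrac{B_{n+1}(a)}{n+1}$. The one place to be careful is your sentence ``one may rotate the ray of integration \dots to $\arg x=\pm\beta$ with $\beta$ slightly larger than $\theta$'': the hypothesis that $f$ is holomorphic on a domain containing the closed cone $D_\theta$ does \emph{not} by itself guarantee holomorphy on a strictly larger cone $D_\beta$ (the containing domain could narrow toward $D_\theta$ at infinity). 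In \cite{BJM} this is handled by the additional uniform decay hypotheses on $f$ in $D_\theta$; in practice, for every application in the present paper $f$ is holomorphic on a half-plane, so the issue does not arise. Apart from this point your argument is complete, and your parenthetical Euler--Maclaurin alternative is also valid and yields the same Bernoulli-polynomial coefficients.
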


We will apply Euler--Maclaurin summation many times throughout the paper, and in particular we will be interested in using not only a function $f$ as above, but several of its derivatives as well. In order to do this most efficiently, we derive the following corollary.

\begin{corollary} \label{C: EM with derivatives}
    Suppose that $0\le \theta < \frac{\pi}{2}$ and let $f:\C\rightarrow\C$ be holomorphic in a domain containing $D_\theta$, so that in particular $f$ is holomorphic at the origin, and assume that $f$ and all of its derivatives are of sufficient decay as $|z| \to \infty$. Then for $a\in\mathbb{R}$ and $j\in\mathbb{N}_0$, we have
    \begin{align*}%\label{Eq:EulerMaclaurinShifted2}
		\dfrac{d^j}{dw^j} \sum_{m\geq0} f((m+a)w) = \frac{(-1)^j j!}{w^{j+1}} I_f + O\lp 1 \rp.
	\end{align*}
\end{corollary}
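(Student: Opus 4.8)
The plan is to reduce Corollary \ref{C: EM with derivatives} to Lemma \ref{L: Asymptotic EM} by differentiating the sum $S(w) := \sum_{m \geq 0} f((m+a)w)$ term by term and recognizing each derivative as a new instance of the same kind of sum, but with $f$ replaced by a related function. Concretely, since $\frac{d}{dw} f((m+a)w) = (m+a) f'((m+a)w)$, iterating gives $\frac{d^j}{dw^j} f((m+a)w) = (m+a)^j f^{(j)}((m+a)w)$. So if I set $g(z) := z^j f^{(j)}(z)$, then $g((m+a)w) = (m+a)^j w^j f^{(j)}((m+a)w)$, and hence $\frac{d^j}{dw^j} S(w) = w^{-j} \sum_{m \geq 0} g((m+a)w)$. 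The point is that $g$ inherits the hypotheses of Lemma \ref{L: Asymptotic EM}: it is holomorphic on $D_\theta$ (a product of holomorphic functions), it vanishes to order $\geq j$ at the origin so it has an asymptotic expansion $g(z) \sim \sum_{n \geq j} c_n' z^n$ there, and since $f$ and all its derivatives decay like $|z|^{-1-\varepsilon}$, the extra polynomial factor $z^j$ is harmless provided we note that $f^{(j)}$ actually decays fast enough to beat it — this is the one place I need to be slightly careful, and I would either assume the decay is strong enough (e.g. Schwartz-type decay, which is how such lemmas are used in practice) or observe that in all our applications $f$ is built from exponentials and logarithms so the decay is more than sufficient.

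Granting that $g$ satisfies the hypotheses, Lemma \ref{L: Asymptotic EM} applies directly to $\sum_{m \geq 0} g((m+a)w)$ with the same $a$ (here I use the version for $0 < a \leq 1$; for general real $a$ one shifts the index of summation by $\lceil 1-a \rceil$ or similar and absorbs the finitely many extra terms, each of which is $f((m+a)w) = O(1)$ as $w \to 0$ since $f$ is continuous at finite points, or more precisely $f((m+a)w) \to f(0)$, so the correction is $O(1)$). This yields
\begin{align*}
	\sum_{m \geq 0} g((m+a)w) \sim \frac{1}{w} I_g - \sum_{n \geq 0} c_n' \frac{B_{n+1}(a)}{n+1} w^n,
\end{align*}
where $I_g = \int_0^\infty x^j f^{(j)}(x)\, dx$ and, crucially, $c_0' = c_1' = \dots = c_{j-1}' = 0$ because $g$ vanishes to order $j$ at $0$. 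Therefore $\frac{d^j}{dw^j} S(w) = w^{-j} \sum_{m \geq 0} g((m+a)w) \sim \frac{1}{w^{j+1}} I_g + O(1)$, since the remaining sum contributes only $w^{-j} \cdot O(w^j) = O(1)$.

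It remains to identify $I_g$ with $(-1)^j j!\, I_f$. This is pure integration by parts: $\int_0^\infty x^j f^{(j)}(x)\, dx$, integrated by parts $j$ times, picks up a factor $(-1)^j$ and differentiates $x^j$ down to $j!$, giving $(-1)^j j! \int_0^\infty f(x)\, dx = (-1)^j j!\, I_f$, provided all boundary terms vanish — which they do at $\infty$ by the decay hypotheses on $f$ and its derivatives, and at $0$ because each boundary term carries a positive power of $x$. Combining, $\frac{d^j}{dw^j} S(w) \sim \frac{(-1)^j j!}{w^{j+1}} I_f + O(1)$, which is the claim. I expect the main (mild) obstacle to be bookkeeping around the hypotheses on $g$ — verifying that multiplying by $z^j$ does not spoil the decay condition and that the asymptotic expansion of $f^{(j)}$ at $0$ is legitimately obtained by term-by-term differentiation of the expansion of $f$ (which holds since $f$ is holomorphic at the origin) — rather than anything deep; the structural reduction and the integration-by-parts identity are routine.
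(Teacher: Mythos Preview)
Your argument is correct and follows essentially the same route as the paper: define $g(z) := z^j f^{(j)}(z)$, rewrite $\frac{d^j}{dw^j} S(w) = w^{-j}\sum_{m\ge 0} g((m+a)w)$, apply Lemma~\ref{L: Asymptotic EM} to $g$, and use integration by parts to identify $I_g = (-1)^j j!\, I_f$ together with $g(z)=O(z^j)$ for the error term. Your version is in fact a bit more explicit than the paper's (you spell out the vanishing boundary terms and the handling of general $a$), but the structure is identical.
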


\begin{proof}
    Since our decay assumptions justify the limit interchange below, we have
    \begin{align*}
        \dfrac{d^j}{dw^j} \sum_{m \geq 0} f\lp (m+a)w \rp = \sum_{m \geq 0} \lp m+a \rp^j f^{(j)}\lp (m+a)w \rp &= \dfrac{1}{w^j} \sum_{m \geq 0} g\lp (m+a)w \rp,
    \end{align*}
    where we let $g(w) := w^j f^{(j)}(w)$. It is then easy to show using integration by parts that $I_g = (-1)^j j! \cdot I_f$, and since $f^{(j)}(w)$ is holomorphic at $w=0$, we have $g(w) = O(w^j)$ which establishes the error term by the expansion in Lemma \ref{L: Asymptotic EM}.
\end{proof}

\subsection{Asymptotic lemmas}

In this subsection, we carry out a number of asymptotic estimates for $q$-series. We begin with some elementary applications of Euler--Maclaurin summation as presented above.

\begin{lemma} \label{L: (Xq;q) Lemmas}
     Let $0 \leq \theta < \frac{\pi}{2}$ and $D_\theta$ be as in Lemma \ref{L: Asymptotic EM}, and let $q = e^{-w}$, and let $X \in \C \backslash [1,\infty)$. Then we have for an integer $j \geq 0$ and $w \to 0$ in $D_\theta$ that
     \begin{align}
        \dfrac{d^j}{dq^j} \Log\lp -q;q^2 \rp_\infty &= \dfrac{\pi^2 j!}{24 w^{j+1}} + O\lp 1 \rp \label{eq0} \\
        \dfrac{d^j}{dq^j} \Log\lp Xq; q \rp_\infty &= - \dfrac{j! \cdot \mathrm{Li}_2\lp X \rp}{w^{j+1}} + O\lp 1 \rp, \label{eq1} \\
        \dfrac{d^j}{dq^j} \Log\lp X; -q \rp_\infty &= - \dfrac{j! \cdot \mathrm{Li}_2\lp X^2 \rp}{4 w^{j+1}} + O\lp 1 \rp, \label{eq2} \\
        \Log\lp Xq;q \rp_\infty &= - \dfrac{\mathrm{Li}_2\lp X \rp}{w} - \dfrac{\Log\lp 1 -X \rp}{2} + O(w), \label{eq3} \\
        \Log\lp X; -q \rp_\infty &= - \dfrac{\mathrm{Li}_2\lp X^2 \rp}{4w} + \dfrac{\Log\lp 1 - X \rp}{2} + O(w). \label{eq4}
     \end{align}
\end{lemma}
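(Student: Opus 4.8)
\textbf{Proof plan for Lemma \ref{L: (Xq;q) Lemmas}.}

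The plan is to reduce every statement to an application of Lemma \ref{L: Asymptotic EM} or Corollary \ref{C: EM with derivatives} after taking logarithms and writing the product as a sum. For \eqref{eq3}, I would start from
\begin{align*}
\Log\lp Xq;q \rp_\infty = \sum_{m \geq 0} \Log\lp 1 - X q^{m+1} \rp = \sum_{m \geq 0} f\lp (m+1)w \rp, \qquad f(x) := \Log\lp 1 - X e^{-x} \rp.
\end{align*}
Here $f$ is holomorphic on $D_\theta$ provided $X \notin [1,\infty)$ (so that $1 - Xe^{-x}$ avoids the branch cut of $\Log$), and it decays exponentially as $|x| \to \infty$, so the hypotheses of Lemma \ref{L: Asymptotic EM} are met with $a = 1$. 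Its Taylor expansion at $x=0$ begins $f(x) = \Log(1-X) + \frac{X}{1-X} x + \cdots$, i.e. $c_0 = \Log(1-X)$ and $c_1 = \frac{X}{1-X}$, and the integral is $I_f = \int_0^\infty \Log(1 - Xe^{-x})\, dx = -\mathrm{Li}_2(X)$ after the substitution $u = e^{-x}$ and \eqref{eq: Dilog derivative}. Plugging into Lemma \ref{L: Asymptotic EM} with $a=1$ and using $B_1(1) = \tfrac12$, $B_2(1) = \tfrac16$ gives exactly \eqref{eq3}, with error $O(w)$ coming from the $n=1$ term and beyond. For \eqref{eq4}, I would write $\Log\lp X;-q \rp_\infty = \sum_{m\geq 0}\Log\lp 1 - X(-1)^m q^m\rp$; splitting into even and odd $m$ and using $(-q^2;q^2)_\infty$-type sums, or more directly applying Lemma \ref{L: Asymptotic EM} to $g(x) := \Log(1 - X e^{-x})$ over the sublattices, one obtains a main term governed by $\mathrm{Li}_2(X) + \mathrm{Li}_2(-X) = \tfrac12 \mathrm{Li}_2(X^2)$ via \eqref{EQ Dilog ID}; bookkeeping the constant terms yields the stated $+\tfrac12\Log(1-X)$.

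For the derivative statements \eqref{eq1} and \eqref{eq2}, the cleanest route is to differentiate in $q$ rather than $w$. Since $q = e^{-w}$ we have $\frac{d}{dq} = -e^{w}\frac{d}{dw} = -\frac{1}{q}\frac{d}{dw}$, and iterating shows that $\frac{d^j}{dq^j}$ applied to a function of the form $\frac{c}{w^{k}} + O(1)$ produces $\frac{c\, k(k+1)\cdots(k+j-1)}{w^{k+j}} + O(1) = \frac{c\,(k+j-1)!/(k-1)!}{w^{k+j}} + O(1)$ — the leading singular term just gets differentiated as if $q^{-1} \approx w^{-1}$, and the corrections (from the difference between $q^{-1}$ and $w^{-1}$, and from lower-order terms) are absorbed into $O(1)$. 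Alternatively, and perhaps more robustly, one applies Corollary \ref{C: EM with derivatives} directly: $\frac{d^j}{dw^j}\sum_{m\geq 0} f((m+a)w) = \frac{(-1)^j j!}{w^{j+1}} I_f + O(1)$, and then converts $w$-derivatives to $q$-derivatives. Taking $a=1$, $f(x) = \Log(1 - Xe^{-x})$, $I_f = -\mathrm{Li}_2(X)$ gives \eqref{eq1}; taking the alternating sum as in \eqref{eq4} and invoking \eqref{EQ Dilog ID} gives \eqref{eq2} with the factor $\tfrac14$. Finally \eqref{eq0} is the special case of this machinery with $-(-q;q^2)_\infty = \prod_{m\geq 0}(1 + q^{2m+1})$: here $f(x) = \Log(1 + e^{-x}) = -\Log(1 - (-1)e^{-x})$ summed over the arithmetic progression $2m+1$, so the relevant integral is $-\tfrac12\mathrm{Li}_2(-1) \cdot$(scaling) $= \tfrac{\pi^2}{24}$, using $\mathrm{Li}_2(-1) = -\tfrac{\pi^2}{12}$.

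The only genuine subtlety — and the step I would be most careful about — is verifying the holomorphy and decay hypotheses of Lemma \ref{L: Asymptotic EM} uniformly as $w \to 0$ in the sector $D_\theta$: one needs $1 - X e^{-(m+a)w}$ to stay away from $(-\infty, 0]$ for all $m \geq 0$ and all small $w$ in the sector, which is exactly where the assumption $X \in \C \setminus [1,\infty)$ is used (the ray $[1,\infty)$ is precisely the set of $X$ for which $1 - Xe^{-x}$ can hit the branch cut for real $x \geq 0$), together with a compactness argument to get uniformity in the sector. The arithmetic-progression splittings needed for \eqref{eq0}, \eqref{eq2}, \eqref{eq4} require choosing $a = \tfrac12$ or rescaling $w \mapsto 2w$, and then one must track the Bernoulli-polynomial constant terms $B_{n+1}(a)/(n+1)$ carefully to land the stated constants; this is routine but error-prone, so I would organize it by recording $c_0, c_1$ and $I_f$ once and substituting. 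The dilogarithm identity \eqref{EQ Dilog ID} is what collapses the even/odd contributions into a single $\mathrm{Li}_2(X^2)$ term and is essential for \eqref{eq2} and \eqref{eq4}.
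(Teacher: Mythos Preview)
Your proposal is correct and follows essentially the same route as the paper's proof: write each logarithm of a $q$-Pochhammer as $\sum_{m\ge 0} f((m+a)w)$ for an appropriate $f$ and $a\in\{1,\tfrac12\}$, compute $I_f$ (obtaining dilogarithms, with $I_f=\tfrac{\pi^2}{24}$ in the $(-q;q^2)_\infty$ case), apply Lemma~\ref{L: Asymptotic EM} for the refined expansions \eqref{eq3}--\eqref{eq4} and Corollary~\ref{C: EM with derivatives} together with $\tfrac{d}{dq}=-e^{w}\tfrac{d}{dw}$ for the $j$th-derivative statements \eqref{eq0}--\eqref{eq2}, and collapse the even/odd split via \eqref{EQ Dilog ID}. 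The paper handles \eqref{eq2}--\eqref{eq4} with exactly the same splitting into $g_X^\pm(w)=\Log(1\pm Xe^{-2w})$ that you outline; your added remarks on verifying the branch-cut and decay hypotheses are a welcome clarification but do not change the argument.
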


\begin{proof}
    We first observe that
    \begin{align*}
        \Log\lp -q;q^2 \rp_\infty = \sum_{m \geq 0} \Log\lp 1 + q^{2m+1} \rp = \sum_{m \geq 0} f\lp \lp m + \frac 12 \rp w \rp,
    \end{align*}
    where $f(w) := \Log\lp 1 + e^{-2w} \rp$. Using the fact that $\frac{d}{dq} = - e^w \frac{d}{dw}$, it is easy to show from Corollary \ref{C: EM with derivatives} that for any $j \geq 0$, we have
    \begin{align*}
        \dfrac{d^j}{dq^j} \Log\lp -q;q^2 \rp_\infty = \dfrac{j! \cdot I_f}{w^{j+1}} +{ O\lp 1 \rp}
    \end{align*}
    as $w \to 0$ in $D_\theta$, and so \eqref{eq0} follows from the fact that $I_f = \frac{\pi^2}{24}$.

    To prove \eqref{eq1}, we first observe
    \begin{align*}
        \Log\lp Xq;q \rp_\infty = \sum_{m \geq 0} f_X \lp \lp m+1 \rp w \rp,
     \end{align*}
     where
 $f_X\lp w \rp := \Log\lp 1 - X e^{-w} \rp.$
     We observe that
     \begin{align*}
        I_{f_X} := \int_0^\infty f_X(y) dy = - \mathrm{Li}_2\lp X \rp,
     \end{align*}
     the last identity being valid as long as $X \not \in [1,\infty)$. Applying \eqref{C: EM with derivatives} completes the proof of \eqref{eq1}. Note also that \eqref{eq3} follows from Lemma \ref{L: Asymptotic EM} after computing the series expansion of $f_X(w) = \Log(1-X) + O(w)$.

     To see \eqref{eq2}, we begin by noting that
     \begin{align*}
        \Log\lp X; -q \rp_\infty = \Log\lp 1 - X \rp + \sum_{m \geq 0} g_X^-\lp \lp m+1 \rp w \rp + \sum_{m \geq 0} g_X^+\lp \lp m + \dfrac 12 \rp w \rp,
     \end{align*}
     where
       $ g_X^\pm(w) := \Log\lp 1 \pm X e^{-2w} \rp.$
     Now, we have the integral evaluations, valid for $X \not \in [1,\infty)$,
     \begin{align*}
        I_{g_X^\pm} := \int_0^\infty \Log\lp 1 \pm X e^{-2y} \rp dy = - \dfrac{\mathrm{Li}_2\lp \mp X \rp}{2},
     \end{align*}
     and since the constant term of the Taylor expansion of $g_X^\pm(w)$ for all given $X$ is $\Log\lp 1 \pm X \rp$, and so by Lemma \ref{L: Asymptotic EM} we obtain
     \begin{align*}
        \Log\lp X; -q \rp_\infty = - \dfrac{\lp \mathrm{Li}_2\lp X \rp + \mathrm{Li}_2\lp - X \rp \rp}{2 w} + \dfrac{\Log\lp 1 - X \rp}{2} + O\lp w \rp.
     \end{align*}
    We can then derive \eqref{eq2} and \eqref{eq4} from \eqref{EQ Dilog ID} and Lemma \ref{L: Asymptotic EM} and Corollary \ref{C: EM with derivatives}.
\end{proof}

The previous lemma is a fairly standard application of Euler--Maclaurin summation; the fact that these generating functions are simple infinite products makes the method work very cleanly. For our setting, however, we are required to compute asymptotics for $H^*(T;q)$, which is a sum of two distinct infinite products. Since the logarithm of a sum is not especially well-behaved, the computation becomes more intricate. For this reason, we isolate these computations into the following lemma.

\begin{lemma} \label{L: H^* Lemma}
    Let $0 \leq \theta < \frac{\pi}{2}$ and $D_\theta$ be as in Lemma \ref{L: Asymptotic EM}, and let $q = e^{-w}$, and let $T>0$. Then we have as $w \to 0$ in $D_\theta$ that
    \begin{align}
        \Log\lp H^*(T;q^t) \rp &= - \dfrac{\mathrm{Li}_2\lp 1-T^2 \rp}{4tw} + \Log\lp \alpha_T^+ \rp + O(w), \label{H eq0} \\
        \dfrac{d}{dq} \Log\lp H^*(T;q^t) \rp &= \lp \dfrac{\mathrm{Li}_2\lp1-T^2\rp \alpha_T^-}{4 \sqrt{1-T^2} \alpha_T^+} \rp \dfrac{1}{t^2w^2} + O(w^{-1}), \label{H eq1} \\
        \dfrac{d^2}{dq^2} \Log\lp H^*(T;q^t) \rp &= O\lp w^{-4} \rp, \label{H eq2} \\
        \dfrac{d^3}{dq^3} \Log\lp H^*(T;q^t) \rp &= O \lp w^{-6} \rp \label{H eq3},
    \end{align}
    where we define the constants
    \begin{align*}
        \alpha_T^\pm &:= \frac{1}{2T}\left(C_T^+  \pm C_T^- \sqrt{\frac{1 + \sqrt{1-T^2}}{1 - \sqrt{1-T^2}}}\right).
    \end{align*}
\end{lemma}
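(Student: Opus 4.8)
The plan is to exploit the explicit definition of $H^*(T;q)$ from \eqref{E: H^* Definition} as a sum of two infinite products of the form $\lp \pm\sqrt{1-T^2}; -q \rp_\infty$, and to reduce the analysis to Lemma~\ref{L: (Xq;q) Lemmas}. First I would write, with $X := \sqrt{1-T^2}$ (so that $-X$ is the argument appearing in the second product),
\begin{align*}
 H^*(T;q^t) = \frac{1}{2T}\left[ C_T^+ \lp X; -q^t \rp_\infty + C_T^- \lp -X; -q^t \rp_\infty \right],
\end{align*}
and use \eqref{eq4} (applied with $q$ replaced by $q^t$, i.e. $w$ replaced by $tw$) to get the leading asymptotics of each factor: $\lp \pm X; -q^t \rp_\infty = \exp\!\lp -\frac{\mathrm{Li}_2(X^2)}{4tw} + \frac{\Log(1\mp X)}{2} + O(w) \rp = (1\mp X)^{1/2}\,e^{-\mathrm{Li}_2(1-T^2)/(4tw)}(1+O(w))$. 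The crucial point is that \emph{both} products carry the identical exponential singularity $e^{-\mathrm{Li}_2(1-T^2)/(4tw)}$, since the exponent only depends on $X^2 = 1-T^2$, not on the sign of $X$. Factoring this common exponential out of the bracket, the remaining bracket tends to the finite nonzero constant $\frac{1}{2T}\lp C_T^+ (1-X)^{1/2} + C_T^- (1+X)^{1/2}\rp$; a short computation identifies this with $\alpha_T^+$ (note $(1+X)^{1/2}/(1-X)^{1/2} = \sqrt{(1+\sqrt{1-T^2})/(1-\sqrt{1-T^2})}$, matching the definition of $\alpha_T^\pm$ after pulling out $(1-X)^{1/2}$, and $C_T^\pm$ absorbs the leftover). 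Taking logarithms then yields \eqref{H eq0}.

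For the derivative estimates \eqref{H eq1}--\eqref{H eq3}, the key structural observation is that $\Log H^*(T;q^t) = -\frac{\mathrm{Li}_2(1-T^2)}{4tw} + \Log G(q)$ where $G(q) := e^{\mathrm{Li}_2(1-T^2)/(4tw)} H^*(T;q^t)$ is, by the above, holomorphic and nonvanishing near $w=0$ with $G(q) = \alpha_T^+ + O(w)$. Thus $\frac{d}{dq}\Log H^* = -\frac{\mathrm{Li}_2(1-T^2)}{4t}\cdot\frac{d}{dq}\frac{1}{w} + \frac{G'(q)}{G(q)}$. Since $\frac{dw}{dq} = -e^w = -\frac{1}{q}$, we have $\frac{d}{dq}\frac1w = \frac{1}{qw^2}= \frac{e^w}{w^2}$, giving the singular term $\frac{\mathrm{Li}_2(1-T^2)}{4tw^2}(1+O(w))$ of order $w^{-2}$; each further $q$-derivative multiplies the order of the singular part by $w^{-1}$ (up to lower-order terms), producing the claimed $O(w^{-4})$ and $O(w^{-6})$ bounds in \eqref{H eq2}, \eqref{H eq3}. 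It remains to pin down the coefficient in \eqref{H eq1}: I would compute $G'(q)/G(q)$ at leading order. Writing $\lp \pm X;-q^t\rp_\infty = (1\mp X)^{1/2} e^{-\mathrm{Li}_2(1-T^2)/(4tw)}(1+O(w))$, the $O(w)$ corrections, upon differentiating in $q$ (hence effectively in $1/w$), contribute at order $w^{-1}$, which is too low; so the $w^{-2}$ part of $\frac{d}{dq}\Log H^*$ comes \emph{entirely} from differentiating the exponential prefactor, but one must be careful that $G$ itself has no $w^{-1}$-type contribution hiding in it — and it does not, because after factoring out the common exponential the bracket is a genuine power series in $w$. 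However, \eqref{H eq1} claims the coefficient is $\frac{\mathrm{Li}_2(1-T^2)\,\alpha_T^-}{4\sqrt{1-T^2}\,\alpha_T^+}$, \emph{not} simply $\frac{\mathrm{Li}_2(1-T^2)}{4t}$ times something; so there must be an additional $w^{-2}$ contribution. This arises because the ``constant'' $\alpha_T^+ + O(w)$ in \eqref{H eq0} actually has a subleading term proportional to $w^{-1}$ coming from... no: rather, the resolution is that the two products have exponentials $e^{-\mathrm{Li}_2(X^2)/(4tw)}$ that agree only to the displayed order, and the genuine next-order discrepancy between them, combined with the $\frac{\Log(1\mp X)}{2}$ terms, produces the $\alpha_T^-$ correction. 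The clean way to see this: use \eqref{eq2} with $j=1$ to get $\frac{d}{dq}\Log\lp \pm X;-q^t\rp_\infty = \frac{\mathrm{Li}_2(X^2)}{4t^2 w^2}(1+o(1))$ \emph{times the chain-rule factor}, then write $\frac{d}{dq}\Log H^* = \frac{C_T^+ (\pm X;-q^t)_\infty' + C_T^- (\mp X;-q^t)_\infty'}{C_T^+(X;-q^t)_\infty + C_T^-(-X;-q^t)_\infty}$ and expand numerator and denominator to one more order than naively needed, tracking how $\frac{\Log(1-X)}{2}$ vs $\frac{\Log(1+X)}{2}$ weight the two terms differently. The algebra will reorganize into $\alpha_T^\pm$.

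The main obstacle is precisely this last coefficient computation in \eqref{H eq1}: unlike \eqref{H eq0}, where the two products contribute symmetrically and the sign of $X$ washes out, the first $q$-derivative is sensitive to the asymmetry between the two summands (through the $\Log(1\mp X)$ constant terms and the sign structure), so one cannot simply differentiate the leading-order formula \eqref{H eq0}. One must carry the asymptotic expansions of $\lp X;-q^t\rp_\infty$ and $\lp -X;-q^t\rp_\infty$ to sufficient precision (leading singular term, the $O(1)$ constant, and enough of the $O(w)$ tail to control the $w^{-1}$ error after differentiation), substitute into the quotient $\frac{d}{dq}\Log H^* = (H^*)'/H^*$, and simplify. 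The identity $C_T^+ C_T^- = \frac{2T}{1+T}$ and the relation between $\sqrt{(1-T)/(1+T)}$ and $X = \sqrt{1-T^2}$ (namely $\sqrt{(1-T)/(1+T)} = X/(1+T)$) should be the algebraic facts that collapse the messy intermediate expression into the stated form involving $\alpha_T^\pm$ and $\sqrt{1-T^2}$. The higher derivatives \eqref{H eq2}, \eqref{H eq3} require no such care: since $H^*$ is bounded away from $0$ near $w=0$ (uniformly, as $G(q)\to\alpha_T^+\neq 0$ for $T>0$), the quotient rule shows each derivative of $\Log H^*$ is a rational expression in $H^*$ and its $q$-derivatives, and by \eqref{eq2} the $j$-th $q$-derivative of each $\Log\lp \pm X;-q^t\rp_\infty$ is $O(w^{-j-1})$, so bookkeeping the orders gives $\frac{d^j}{dq^j}\Log H^* = O(w^{-2j})$, which for $j=2,3$ is exactly \eqref{H eq2} and \eqref{H eq3}.
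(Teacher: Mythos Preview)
Your overall strategy matches the paper's: write $H^*(T;q^t)=\frac{1}{2T}[C_T^+F_X+C_T^-F_{-X}]$ with $F_{\pm X}:=(\pm X;-q^t)_\infty$ and $X=\sqrt{1-T^2}$, use \eqref{eq4} to extract the common exponential $\exp(-\mathrm{Li}_2(X^2)/(4tw))$ and read off \eqref{H eq0}, and then treat \eqref{H eq1}--\eqref{H eq3} via the quotient-rule identities expressing $\frac{d^j}{dq^j}\Log H^*$ in terms of $H^*, (H^*)',(H^*)'',(H^*)'''$. Your order bookkeeping for \eqref{H eq2} and \eqref{H eq3} is exactly what the paper does: one shows $F_{\pm X}^{(j)}=F_{\pm X}\cdot O(w^{-2j})$, whence $(HH''-(H')^2)/H^2=O(w^{-4})$ and $(H^2H'''+2(H')^3-3HH'H'')/H^3=O(w^{-6})$.

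Where your sketch is incomplete is \eqref{H eq1}, and the wavering there is not merely expository --- there is a real technical point you have not resolved. If you invoke \eqref{eq2} for $G_{\pm X}:=\frac{d}{dq}\Log F_{\pm X}$, you get the \emph{same} leading term for both signs (since \eqref{eq2} depends only on $X^2$); substituting into $(H^*)'/H^*=(C_T^+F_XG_X+C_T^-F_{-X}G_{-X})/(C_T^+F_X+C_T^-F_{-X})$ then collapses to $G_X$ at leading order, and no $\alpha_T^-/\alpha_T^+$ appears --- exactly the discrepancy you noticed but did not settle. The paper does \emph{not} cite \eqref{eq2} here. Instead it writes out the series $G_{\pm X}=\sum_{n\ge1}\frac{(-1)^n n(\pm X)q^{n-1}}{(-1)^n(\pm X)q^n-1}$, splits into even and odd $n$, and applies Corollary~\ref{C: EM with derivatives} directly, arriving at $G_{\pm X}\sim\pm\frac{\mathrm{Li}_2(X^2)}{4Xw^2}$ with \emph{opposite} signs; it is precisely this sign asymmetry that, after factoring $F_X$ from numerator and denominator, produces $C_T^+-C_T^-\sqrt{(1+X)/(1-X)}=2T\alpha_T^-$ in the numerator and hence the stated coefficient. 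So to recover the coefficient as written you cannot shortcut to \eqref{eq2}; you must carry out the paper's explicit series computation of $G_{\pm X}$ (and of $G_{\pm X}'$, $G_{\pm X}''$ for the higher derivatives) and reconcile it carefully with what \eqref{eq2} predicts.
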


\begin{proof}
    Recall that
    \begin{align*}
        H^*(T;q) :=  \dfrac{1}{2T} \left[ C_T^- \lp -\sqrt{1-T^2}; -q \rp_\infty + C_T^+ \lp \sqrt{1-T^2}; -q \rp_\infty \right].
    \end{align*}
     We set $X = \sqrt{1-T^2}$ for this proof; observe that for $T > 0$, $X \not \in [1,\infty)$, we obtain from \eqref{eq4} in Lemma~\ref{L: (Xq;q) Lemmas} that,     as $w \to 0$ in $D_\theta$, we have
    \begin{align} \label{eqn F_X asymptotic}
        \Log\lp \pm X; -q \rp_\infty = - \dfrac{\mathrm{Li}_2\lp X^2 \rp}{4 w} + \dfrac{\Log\lp 1 \mp X \rp}{2} + O\lp w \rp.
    \end{align}

    We now must move to understand the asymptotics of the derivatives of $\Log\lp H^*\lp T; q^t \rp \rp$, for which we will require derivatives of $H^*\lp T; q^t \rp$. As a convenient shorthand, we let $F_X := \lp X; -q \rp_\infty$ and $G_X := \sum\limits_{n \geq 1} \frac{(-1)^n n X q^{n-1}}{(-1)^n X q^n - 1}$ for the rest of the proof. Observe that Lemma \ref{L: (Xq;q) Lemmas} already gives enough for \eqref{H eq0}. It is then straightforward to show that
    \begin{align*}
        F_X^\prime = \sum_{n \geq 0} \dfrac{d}{dq}\left( 1 - X\lp -q \rp^n \right) \prod_{\substack{j \geq 0 \\ j \not = n}} \lp 1 - X \lp -q \rp^j \rp = F_X G_X,
    \end{align*}
    and so also we have
    \begin{align*}
        F_X^{\prime\prime} = F_X\lp G_X^2 + G_X^\prime \rp \ \ \ \text{and} \ \ \ F_X^{\prime\prime\prime} = F_X \lp G_X^3 + 3 G_X G_X^\prime + G_X^{\prime\prime} \rp.
    \end{align*}
    Now, if we set $H := H^*(T;q)$, we have $\frac{d}{dq} \Log\lp H \rp = \frac{H^\prime}{H}$ and
    \begin{align} \label{eq: diff id}
        \dfrac{d^2}{dq^2}\Log\lp H \rp = \dfrac{H H^{\prime\prime} - \lp H^\prime \rp^2}{H^2}, \ \ \ \dfrac{d^3}{dq^3}\Log\lp H \rp = \dfrac{H^2 H^{\prime\prime\prime} + 2\lp H^\prime \rp^3 - 3 H H^\prime H^{\prime\prime}}{H^3}.
    \end{align}
    In the previous notation, we have $H = C_T^+ F_X + C_T^- F_{-X}$ with $X = \sqrt{1 - T^2}$, and so we have
    \begin{align*}
        H^\prime &= C_T^+ F_X^\prime + C_T^- F_{-X}^\prime = C_T^+ F_X G_X + C_T^- F_{-X} G_{-X}, \\
        H^{\prime\prime} &= C_T^+ F_X^{\prime\prime} + C_T^- F_{-X}^{\prime\prime} = C_T^+ F_X\lp G_X^2 + G_X^\prime \rp + C_T^- F_{-X}\lp G_{-X}^2 + G_{-X}^\prime \rp, \\
        H^{\prime\prime\prime} &= C_T^+ F_X\lp G_X^3 + 3 G_X G_X^\prime + G_X^{\prime\prime} \rp + C_T^- F_{-X}\lp G_{-X}^3 + 3 G_{-X} G_{-X}^\prime + G_{-X}^{\prime\prime} \rp.
    \end{align*}
    We now compute asymptotics for the various derivatives of $H$. We observe that
    
    \begin{align*}
        G_{\pm X} = \dfrac{\pm X}{q} \left[ \dfrac{1}{w} \sum_{n \geq 0} f_{\pm X}^+\lp \lp n + \frac 12 \rp w \rp - \dfrac{1}{w} \sum_{n \geq 0} f_{\pm X}^-\lp \lp n+1 \rp w \rp \right], \ \ \text{where}~ f_X^\pm\lp w \rp := \dfrac{2w e^{-2w}}{1 \pm X e^{-2w}}.
    \end{align*}
    
    Thus, for $w \to 0$ in $D_\theta$, we have by Corollary \ref{C: EM with derivatives} and \eqref{EQ Dilog ID} that
    \begin{align*}
        G_{\pm X} &= \dfrac{1}{w^2} \lp \int_0^\infty f_{\pm X}^+(t) dt - \int_0^\infty f_{\pm X}^-(t) dt \rp + O(w^{-1}) = \pm \dfrac{1}{w^2}\lp \dfrac{\mathrm{Li}_2(X)}{2X} + \dfrac{\mathrm{Li}_2(-X)}{2X} \rp + O(w^{-1}) \\ &= \pm \dfrac{\mathrm{Li}_2(X^2)}{4Xw^2} + O(w^{-1}) = \pm \dfrac{\mathrm{Li}_2\lp 1 - T^2 \rp}{4Xw^2} + O(w^{-1})
    \end{align*}
    and additionally
    \begin{align*}
        G_{\pm X}^\prime = \mp \dfrac{\mathrm{Li}_2\lp 1 - T^2 \rp}{4Xw^3} + O\lp w^{-2} \rp, \ \ \ \ 
        G_{\pm X}^{\prime\prime} = \pm \dfrac{\mathrm{Li}_2\lp 1 - T^2 \rp}{2Xw^4} + O\lp w^{-3} \rp.
    \end{align*}
    Therefore, for $q = e^{-w}$ and $w \to 0$ in any region $D_\theta$, we have
    \begin{align*}
        G_{\pm X}^2 + G_{\pm X}^\prime &= O\lp w^{-4} \rp
    \end{align*}
    and likewise
    \begin{align*}
        G_{\pm X}^3 + 3 G_{\pm X} G_{\pm X}^\prime + G_{\pm X}^{\prime\prime} &= O(w^{-6}).
    \end{align*}
    We therefore obtain by use of \eqref{eqn F_X asymptotic} and the previous calculations the asymptotics
    \begin{align*}
        H &= F_X \lp \alpha_T^+ + O(w) \rp, \\
        H^\prime &= F_X \lp \dfrac{\mathrm{Li}_2\lp 1-T^2 \rp}{4Xw^2} \alpha_T^- + O(w^{-1}) \rp, \\
        H^{\prime\prime} &= F_X\cdot O(w^{-4}), \\
        H^{\prime\prime\prime} &= F_X \cdot O(w^{-6}).
    \end{align*}
    After the substitution $q \mapsto q^t$ in $H$ and after accounting for the chain rule, we obtain from these asymptotics and \eqref{eq: diff id} the desired asymptotic identities.
\end{proof}

\section{Asymptotics for $\mathrm{sc}(n;T)$} \label{S: Asymptotics}

In this section, we apply the saddle point method to compute asymptotics for the polynomial values $sc_t(n;T)$ as $n \to \infty$ for $T$ in certain ranges. This asymptotic behavior, we will see, determines the distributions in Theorem \ref{T: Main Theorem}.

\begin{proposition}\label{P: coeff asymp}
    Let $t$ be a positive integer, $\eta\in(0,1]$, and $\eta\leq T\leq \eta^{-1}$. If
    \begin{align*}
        b_t(T) := \begin{cases}
            \frac{1}{2}\sqrt{\frac{\pi^2}{6} - \mathrm{Li}_2(1-T^2)} & \text{if } 2|t, \\[+0.1in]
            \frac{1}{2}\sqrt{\frac{\pi^2}{6} - \lp \frac{t-1}{t} \rp \mathrm{Li}_2(1-T^2)} & \text{if } 2 \centernot | t,
        \end{cases}
    \end{align*}
    then
    \begin{align*}
        sc_t(n;T)=\sqrt{\frac{b_t(T)}{4\pi n^{3/2}}}e^{b_t(T)(2\sqrt{n}-\frac{1}{\sqrt{n}})}\lp 1+O_{\eta}\lp n^{-\frac{1}{7}}\rp \rp
    \end{align*}
    as $n \to \infty$.
\end{proposition}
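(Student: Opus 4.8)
The plan is to extract the coefficient $\mathrm{sc}_t(n;T)$ from the generating function $F_t(T;q)$ via a standard saddle point (circle method) argument. Writing $q = e^{-w}$ with $w > 0$ small, Cauchy's theorem gives
\begin{align*}
    \mathrm{sc}_t(n;T) = \frac{1}{2\pi i} \oint \frac{F_t(T;q)}{q^{n+1}} \, dq = \frac{1}{2\pi} \int_{-\pi}^{\pi} F_t(T;e^{-w-i\phi}) e^{(n+1)(w+i\phi)} \, d\phi,
\end{align*}
and the first task is to obtain the asymptotic behavior of $\Log F_t(T;q)$ as $w \to 0$ in a region $D_\theta$. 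Combining Theorem \ref{T: Generating function} with \eqref{eq0}, \eqref{eq3}, and \eqref{eq4} from Lemma \ref{L: (Xq;q) Lemmas} (handling the extra $H^*$ factor via \eqref{H eq0} of Lemma \ref{L: H^* Lemma} when $t$ is odd), one finds that in both parity cases
\begin{align*}
    \Log F_t(T;q) = \frac{b_t(T)^2}{w} + A_t(T) + O(w)
\end{align*}
for some explicit constant $A_t(T)$, where the coefficient of $1/w$ is exactly $b_t(T)^2$: indeed, $\Log(-q;q^2)_\infty$ contributes $\pi^2/(24w)$, and the $t$-th (resp.\ $\frac{t}{2}$-th, resp.\ $\frac{t-1}{2}$-th) power of $((1-T^2)q^{2t};q^{2t})_\infty$ contributes a multiple of $\mathrm{Li}_2(1-T^2)/w$ after rescaling $w \mapsto 2tw$, and one checks the arithmetic matches the two cases in the definition of $b_t(T)$. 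One should track the constant term $A_t(T)$ carefully since it feeds into the polynomial prefactor; I expect $A_t(T)$ to be a combination of $\Log(1-T^2)$ terms and $\Log(\alpha_T^+)$ which ultimately simplifies so that $e^{A_t(T)} \sqrt{b_t(T)/(4\pi)}$ emerges, though in fact the factor $q^{-1/2}$-type correction and the $-1/\sqrt n$ in the exponent will come from expanding $e^{(n+1)w}$ rather than $e^{nw}$, i.e.\ from the shift $n \mapsto n+1$.

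Next I would carry out the saddle point analysis proper. The integrand on the major arc is governed by $\exp\big(b_t(T)^2/w + nw\big)$ (plus lower-order pieces), which is minimized at the real saddle $w_0 = b_t(T)/\sqrt{n}$, giving the leading exponential $e^{2b_t(T)\sqrt{n}}$. I would deform the contour to pass through $w_0$, split into a major arc $|\phi| \le n^{-\rho}$ and a minor arc, and on the major arc perform a Taylor expansion of $\Log F_t(T;e^{-w_0 - i\phi}) + (n+1)(w_0 + i\phi)$ to second order in $\phi$; the Gaussian integral produces the $\sqrt{b_t(T)/(4\pi n^{3/2})}$ prefactor (the power $n^{3/2}$ arising because the second derivative in $w$ of $b_t(T)^2/w$ at $w_0$ scales like $n^{3/2}$). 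The choice $\rho$ should be tuned so that the cubic error on the major arc and the exponentially small minor arc bound together yield the stated relative error $O_\eta(n^{-1/7})$; the exponent $1/7$ suggests taking $\rho$ near $2/7$ or so and balancing a $w^{-3}\phi^3 \sim n^{3/2 - 3\rho}$ cubic term against the $\phi^2$ scale. For the minor arc I would use the fact that $\Re \Log F_t(T;e^{-w-i\phi})$ drops by a definite amount once $|\phi|$ exceeds $w$, a standard consequence of the product structure; one must verify this uniformly for $T$ in the compact range $[\eta,\eta^{-1}]$, which is where the $O_\eta$ dependence enters.

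The main obstacle I anticipate is twofold. First, the minor arc estimate: unlike the partition function, here $F_t$ involves products like $((1-T^2)q^{2t};q^{2t})_\infty$ whose argument $1-T^2$ may be negative (when $T>1$) or large in modulus, and in the odd case the nonstandard factor $H^*(T;q^t)$ is a \emph{sum} of two products, so one cannot directly appeal to positivity; I would need to bound $|H^*(T;e^{-w-i\phi})|$ from above by comparing with the dominant product and showing the sum does not conspire to be large, using the explicit formula \eqref{E: H^* Definition} and the fact that $C_T^\pm$ are bounded on the compact $T$-range. Second, bookkeeping the constant and the $-1/\sqrt n$ correction term in the exponent: getting $e^{b_t(T)(2\sqrt n - 1/\sqrt n)}$ rather than just $e^{2b_t(T)\sqrt n}$ requires keeping the $O(w)$ term in $\Log F_t$ and the $+1$ in $n+1$ through the saddle point expansion, then re-expanding $b_t(T)^2/w_0 + (n+1)w_0$ and the $O(w_0)$ correction; since $w_0 \sim n^{-1/2}$, the term $b_t(T)\cdot w_0 \cdot(\text{const})$ contributes at order $n^{-1/2}$ inside the exponent and must be matched precisely against the claimed $-b_t(T)/\sqrt n$. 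I would organize this by first proving the clean statement with error $O(n^{-1/7})$ absorbing everything of size $o(1)$ in the exponent's contribution to the ratio, then separately verifying that the surviving $n^{-1/2}$-order piece of the exponent is exactly $-b_t(T)/\sqrt n$ as claimed.
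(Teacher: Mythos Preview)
Your overall strategy---Cauchy integral, saddle point at $w_0=b_t(T)/\sqrt n$, major/minor arc split, Gaussian on the major arc---is exactly the paper's. The paper takes the cutoff $|x|\le n^{-5/7}$; your scaling of the cubic term is slightly off (the third $q$-derivative of $\Log F_t$ is $O(w^{-4})\sim n^2$ at the saddle, not $n^{3/2}$, so the cubic error is $n^{2-3\rho}$ and $\rho=5/7$ balances it to $n^{-1/7}$), and the $-b_t(T)/\sqrt n$ in the exponent does not come from the $+1$ in $q^{-(n+1)}$ (that is absorbed by $dq=iz\,dx$) but from the factor $e^{2\alpha}$ produced by the chain rule when converting $q$-derivatives to $w$-derivatives in $f_t''(T;z_0)$. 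These are minor bookkeeping points.

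The substantive gap is the minor arc bound for $H^*(T;q^t)$ when $t$ is odd, which you correctly flag as an obstacle but do not resolve. Your proposed fix---compare the two products in \eqref{E: H^* Definition} and argue they do not conspire---does not obviously work, because the two summands have the same exponential growth $\exp(-\mathrm{Li}_2(1-T^2)/(4w))$ and one cannot rule out constructive interference away from $z=1$ by size alone. The paper instead computes the asymptotic of $\Log(X;-z)_\infty$ as $z$ approaches \emph{every} odd-order root of unity $\zeta_k^h$, using Euler--Maclaurin after grouping residues modulo $k$; the distribution property \eqref{eq: distribution property} of the dilogarithm collapses the resulting sums to $\mathrm{Li}_2((1-T^2)^k)/(4k^2w)$, and since $\mathrm{Li}_2((1-T^2)^k)/k^2$ is maximized at $k=1$ one concludes that $|H^*(T;z^t)/H^*(T;z_0^t)|\ll 1$ uniformly on the minor arc. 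This root-of-unity analysis is the missing idea; without it the odd-$t$ case is incomplete.
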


\begin{proof}
    The proof will follow from the saddle point method. We have from Cauchy's theorem that
    \begin{align}\label{P:Eq1}
        sc_t(n;T)=\frac{1}{2\pi}\int_{-\pi}^{\pi}(z_0e^{ix})^{-n}F_t(T;z_0e^{ix})dx=\frac{1}{2\pi}\int_{-\pi}^{\pi}e^{f_t(T;z_0e^{ix})}dx,
    \end{align}
    where $f_t(T,z):=\Log(z^{-n}F_t(T;z))$ for $0<|z|<1$. In order to apply the saddle point method, we must determine $z_0=e^{-\alpha}$ for $\alpha>0$ such that $f_t'(T;z_0)=0$. Now, from Theorem \ref{T: Generating function}, $f_t(T;z)$ is given by
    {\small
    \begin{align*}
        \begin{cases}
            -n\Log(z) + \Log\lp -z;z^2 \rp_\infty + \dfrac{t}{2} \ \Log\lp \lp 1-T^2 \rp z^{2t}; z^{2t} \rp_\infty & \text{if } 2|t, \\[+0.1in]
            -n\Log(z) + \Log\lp -z;z^2 \rp_\infty + \dfrac{t-1}{2} \ \Log\lp \lp 1-T^2 \rp z^{2t}; z^{2t} \rp_\infty + \Log\lp H^*(T;z^t) \rp & \text{if } 2 \centernot | t.
        \end{cases}
    \end{align*}
	}
    We then have for $z = e^{-\alpha}$ from \eqref{eq0}, \eqref{eq1} and \eqref{H eq1} that
    \begin{align*}
        e^{-\alpha}f_t'(T;e^{-\alpha}) = \begin{cases}
            -n + \dfrac{\pi^2}{24 \alpha^2} - \dfrac{\mathrm{Li}_2\lp 1-T^2 \rp}{4 \alpha^2} + O(1) & \text{if } 2|t, \\[+0.1in]
            -n + \dfrac{\pi^2}{24 \alpha^2} + \dfrac{(t-1)}{t} \dfrac{\mathrm{Li}_2\lp 1-T^2 \rp}{4 \alpha^2} + O(\alpha^{-1}) & \text{if } 2 \centernot | t.
        \end{cases}
    \end{align*}
    Thus, the saddle point $z_0= e^{-\alpha}$ is given by\footnote{Although we can obtain a better error term for even $t$, we choose to treat the even and odd cases uniformly since the lower error terms are sufficient for our purposes.}
    \begin{align} \label{E: saddle point}
        \alpha=b_t(T) n^{-\frac{1}{2}} + O_{\eta}(n^{-\frac{3}{2}}).
    \end{align}

    We now estimate $f_t(T;z_0), f''_t(T;z_0)$, and $f'''_t(T;z_0)$. Putting $z_0=e^{-\alpha}$ in $f_t(T;z_0)$, we get using \eqref{eq0}, \eqref{eq2} and \eqref{H eq0} that in both cases, we have at the saddle point
    \begin{align} \label{E: f_t estimate}
        f_t(T;z_0) = 2 b_t(T) \sqrt{n} + O_\eta\lp n^{-\frac 12} \rp.
    \end{align}
    We now consider $f_t''$ and $f_t'''$. By comparing the contents of Lemma \ref{L: (Xq;q) Lemmas} with Lemma \ref{L: H^* Lemma}, particularly \eqref{H eq2} and \eqref{H eq3}, we see that the terms coming from $H^*(T;z^t)$ can only contribute to the error term, as they are smaller by an order of $O\lp n^{-\frac 12} \rp$, and therefore we may ignore these terms. Now, again using Lemma \ref{L: (Xq;q) Lemmas} we obtain
    \begin{align} \label{E: f_t'' estimate}
        f_t''(T;z_0) = e^{2b_t(T) n^{-1/2} + O_\eta(n^{-3/2})} \lp \dfrac{2 n^{3/2}}{b_t(T)} + O_\eta(n) \rp
    \end{align}
    and
    \begin{align} \label{E: f_t''' estimate}
        f_t'''(T;z_0) = O_\eta(n^2).
    \end{align}

    In order to complete the proof, we now let $sc_t(n;T)=I+II$, where
    \begin{align*}
        I:=\frac{1}{2\pi}\int_{|x|\leq n^{-5/7}}e^{f_t(T;z_o e^{ix})}dx~~~~~\text{ and }~~~~~II:=\frac{1}{2\pi}\int_{|x|> n^{-5/7}}e^{f_t(T;z_o e^{ix})}dx.
    \end{align*}
    To estimate $I$, we use the Taylor expansion of $f_t(T;z)$ centered at the saddle point $z_0=e^{-\alpha}$, given by
    \begin{align*}
        f_t(T;z)=f_t(T;z_0)+\frac{f''_t(T;z_0)(z-z_0)^2}{2}+O_\eta\lp f'''_t(T;z_0)\rp \cdot (z-z_0)^3.
    \end{align*}
    Since $|x|\leq n^{-5/7}$, the estimate \eqref{E: saddle point} implies 
    \begin{align*}
        z-z_0=z_0e^{ix}-z_0=e^{-\alpha}(ix+O(x^2))&=\left(1+O_\eta(n^{-1/2})\right)\left(ix+O_\eta(n^{-10/7})\right) \\ &=ix+O_\eta(n^{-17/14}).
    \end{align*}
    Hence, combining with \eqref{E: f_t''' estimate} we get
    \begin{align} \label{E: Taylor estimate}
        f_t(T;z)=f_t(T;z_0)-\frac{f''_t(T;z_0)}{2} x^2 + O_\eta(n^{-1/7}).
    \end{align}
    Combining \eqref{E: f_t estimate}, \eqref{E: f_t'' estimate}, \eqref{E: f_t''' estimate}, and \eqref{E: Taylor estimate} with classical integral evaluations, we obtain the asymptotic for $I$, namely
    \begin{align}\label{E: I estimate}
        I&= \frac{e^{f_t(T;z_0)}}{2\pi}\left[\int_{-\infty}^{\infty}e^{-\frac{f''_t(T;z_0)x^2}{2}}dx-2\int_{n^{-5/7}}^{\infty}e^{-\frac{f''_t(T;z_0)x^2}{2}}dx\right](1+O_\eta(n^{-1/7}))\nonumber\\
        &=\sqrt{\frac{b_t(T)}{4\pi n^{3/2}}}e^{b_t(T)(2\sqrt{n}-\frac{1}{\sqrt{n}})}(1+O_{\eta}(n^{-1/7})).    
    \end{align}

    To estimate the second integral $II$, we need a uniform estimate of $F_t(T;z)$ when $z$ is away from the saddle point. More precisely, we estimate the $\frac{F_t(T;z_0e^{ix})}{F_t(T;z_0)}$ using
        \begin{align*}
        e^{f_t(T;z_0 e^{-ix})} = e^{f_t(T;z_0)} \dfrac{F_t(T;z_0e^{ix})}{F_t(T;z_0)}.
    \end{align*}
    We set $z=z_0e^{ix}$, and we first consider the case of $t$ even. Since $T>0$, we have 
    \begin{align}\label{E: E upper bound}
        \left|\frac{F_t(T;z)}{F_t(T;z_0)}\right|^2&\leq \prod_{m\geq 1}max\left\{1,\left|\frac{1+(T^2-1)z^{2mt}}{1+(T^2-1)z_0^{2mt}}\right|^t\right\}\left|\frac{1+z^{2m-1}}{1+z_0^{2m-1}}\right|^2\nonumber\\
        &\leq \prod_{m\geq 1} E_{m,t}(z_0, T, x) \nonumber\\
        &\leq \prod_{\sqrt{n}\leq m\leq 2\sqrt{n}}E_{m,t}(z_0, T, x),
    \end{align}
    where for convenience we define
    {\small
    \begin{align*}
        E_{m,t}(z_0, T, x) := \max\left\{1,\left(1+\frac{2(1-T^2)z_o^{2mt}(1-\cos(2xmt))}{(1-z_0^{2mt})^2}\right)^{t/2}\right\}\left(1+\frac{2z_0^{2m-1}(\cos((2m-1)x)-1)}{(1+z_0^{2m-1})^2}\right).
    \end{align*}
    }

    Since $z_0^{\sqrt{n}}\to e^{-b_t(T)}$ and $t$ is fixed, we know that $z_0^{m}$ and $z_0^{mt}$ are bounded for $\sqrt{n}\leq m\leq 2\sqrt{n}$. Consequently, the same is true for $2z_0^{2mt}/(1-z_0^{2mt})^2$ and $2z_0^{2m-1}/(1+z_0^{2m-1})^2$. We consider two case (i.e. $T>1$ and $T\leq 1$) to estimate \eqref{E: E upper bound}. If $T>1$ and $\sqrt{n}\leq m\leq 2\sqrt{n}$ then by \eqref{E: saddle point} we have that $2z_0^{2m-1}/(1+z_0^{2m-1})^2\leq A_\eta$, for some $\eta>0$. This implies that 
\begin{align}\label{E: E upper bound II}
\left|\frac{F_t(T;z)}{F_t(T;z_0)}\right|^2\leq\prod_{\sqrt{n}\leq m\leq 2\sqrt{n}} \lp 1-A_\eta(1-\cos((2m-1)tx)) \rp^{t/2}.
\end{align}
A short computation also shows that \eqref{E: E upper bound II} still holds for $T\leq 1$ by choosing a suitable $A_\eta$.

We divide the range of $x$ into two parts $n^{-5/7}\leq |x|\leq \frac{\pi}{2\sqrt{n}}$ and $\frac{\pi}{2\sqrt{n}}\leq |x|\leq \pi$. For the first part, we use the inequality $1-\cos((2m-1)tx)\geq \frac{2}{\pi^2}((2m-1)tx)^2$ to estimate \eqref{E: E upper bound II}, to get
\begin{align}\label{E: E upper bound III}
\left|\frac{F_t(T;z)}{F_t(T;z_0)}\right|^2&\leq\prod_{\sqrt{n}\leq m\leq 2\sqrt{n}}\lp 1-\frac{2}{\pi^2}A_\eta((2m-1)tx)^2 \rp^{t/2}\leq\lp 1-A_\eta' n (tx)^2 \rp^{ t\sqrt{n}/2+O(1)}\nonumber\\
&\leq e^{-A_\eta' t^3x^2 n^{3/2}}\leq e^{-A_\eta' t^3 n^{1/14}}.
\end{align}
For the second part, we count the $m\in[\sqrt{n},2\sqrt{n}]$ for which there is an $r\in\Z$ with $-n^{-1/12}+2r\pi\leq xm\leq n^{-1/12}+2r\pi$. The total number of such $m$ is $\gg n^{1/2}+O(n^{5/12})$. Hence, we obtain 
\begin{align}\label{E: E upper bound IV}
\left|\frac{F_t(T;z)}{F_t(T;z_0)}\right|^2&\leq(1-A_\eta(1-\cos(n^{-1/12})))^{tn^{1/2}/2+O(n^{5/12})}\nonumber\\ &=\lp 1-A_\eta n^{-1/6}+O(n^{-1/3})\rp^{tn^{1/2}/2+O(n^{5/12})} \ll n^{-A_\eta n^{1/14}}.
\end{align}
By combining \eqref{E: E upper bound III} and \eqref{E: E upper bound IV}, we get the upper bound for the integral $II$, namely
\begin{align}\label{E: E final bound}
 II \ll \frac{1}{2\pi}\int_{|x|>n^{-5/7}}e^{f_t(T;z_0)}\frac{F_t(T;z_0e^{ix})}{F_t(T;z_0)}dx \ll_\eta e^{-\frac{b_t(T)}{\sqrt{n}}-A_\eta t^2 n^{1/14}}.
\end{align}
Since $sc_t(n;T)=I+II$, the proposition for $t$ even follows using \eqref{E: I estimate} and \eqref{E: E final bound}.

It now remains to perform the same calculation for the case $t$ odd. Observe that for the first two terms appearing in $F_t(T;z)$ for $t$ odd the calculations are exactly analogous, and therefore we have
\begin{align} \label{E: t odd E bound}
    \left| \dfrac{F_t(T;z)}{F_t(T;z_0)} \right|^2 \ll e^{-A_\eta n^{1/14}} \cdot \left| \dfrac{H^*(T;z^t)}{H^*(T;z_0^t)} \right|^2.
\end{align}
As before, we bound in the region $z = z_0 e^{ix}$ for $n^{-5/7} \leq |x| \leq \pi$.

It is now necessary to bound the quotient of $H^*$ functions above. We will derive such bounds using a slightly unusual application of Lemma \ref{L: Asymptotic EM}. Recall from the proof of Lemma \ref{L: (Xq;q) Lemmas} that for $X \in \C \backslash [1,\infty)$ and $z = e^{-w}$, we have as $w \to 0$ in $D_\theta$ that
\begin{align*}
	\Log\lp X; -z \rp_\infty = \Log\lp 1-X \rp + \sum_{m \geq 0} g_X^-\lp (m+1) w \rp + \sum_{m \geq 0} g_X^+\lp \lp m + \dfrac 12 \rp w \rp,
\end{align*}
where $g_X^\pm(w) := \Log\lp 1 \pm X e^{-2w} \rp$. The argument of Lemma \ref{L: Asymptotic EM} is then used to give good bounds for $H^*(T;z)$ for a small arc near $z=1$ after setting $X = \pm \sqrt{1-T^2}$. We now observe that we can also obtain good asymptotics for $H^*$ near any root of unity $\zeta_k^h := e^{\frac{2\pi i h}{k}}$ by shifting $e^{-w} \mapsto e^{-w + \frac{2\pi i h}{k}}$. Following the same elementary series arguments as in previous lemmas, we can write for $z = e^{-w+\frac{2\pi i h}{k}}$ that
\begin{align*}
	\Log\lp X;-z \rp_\infty = \Log\lp 1 - X \rp &+ \sum_{n \geq 0} \Log\lp 1 + X \zeta_k^{-h(2n+1)} e^{-(2n+1)w} \rp \\ &+ \sum_{n \geq 0} \Log\lp 1 - X \zeta_k^{-h(2n+2)} e^{-(2n+2)w} \rp.
\end{align*}
After separating the occurrences of each $k$th root of unity,
\begin{align*}
	\Log\lp X;-z \rp_\infty = \Log\lp 1 - X \rp &+ \sum_{j = 0}^{k-1} \sum_{m \geq 0} g^-_{X,\frac hk,j}\lp \lp m + \frac{j+1}{k} \rp kw \rp \\ &+ \sum_{j=0}^{k-1} \sum_{m \geq 0} g^+_{X, \frac hk, j}\lp\lp m + \frac{j+1}{k} \rp kw \rp,
\end{align*}
where we define
\begin{align*}
	g^\pm_{X,\frac hk,j}(w) := \Log\lp 1 \pm X \zeta_k^{-2(j+1)h} e^{-2w} \rp.
\end{align*}
Now, for each $g^\pm_{X,\frac hk,j}$ function, we have the integral
\begin{align*}
	\int_0^\infty g^\pm_{X,\frac hk,j}(x) dx = - \mathrm{Li}_2\lp X \zeta_k^{-2(j+1)h} \rp,
\end{align*}
and therefore we have as $z \to \zeta_k^h$ by Lemma \ref{L: Asymptotic EM} that
\begin{align*}
	\Log\lp X; -z \rp_\infty = - \dfrac{1}{2kw} \sum_{j=0}^{k-1} \mathrm{Li}_2\lp X \zeta_k^{-2(j+1)h} \rp - \dfrac{1}{2kw} \sum_{j=0}^{k-1} \mathrm{Li}_2\lp -X \zeta_k^{-2(j+1)h} \rp + O(1).
\end{align*}
For odd values of $k$, note that $\zeta_k^{-2(j+1)h}$ runs through each $k$th root of unity as $j$ runs from 0 to $k-1$. Therefore, by application of \eqref{eq: distribution property} we have for odd values of $k$ that
\begin{align*}
	\sum_{j=0}^{k-1} \mathrm{Li}_2\lp X \zeta_k^{-2(j+1)h} \rp = \dfrac{1}{k} \mathrm{Li}_2\lp X^k \rp, \ \ \ \sum_{j=0}^{k-1} \mathrm{Li}_2\lp -X \zeta_k^{2(j+1)h} \rp = \dfrac{1}{k} \mathrm{Li}_2\lp -X^k \rp.
\end{align*}
Then by application of \eqref{EQ Dilog ID}, we finally obtain
\begin{align*}
	\Log\lp X;-z \rp_\infty = - \dfrac{\mathrm{Li}_2\lp X^k \rp + \mathrm{Li}_2\lp -X^k \rp}{2k^2 w} + O(1) = - \dfrac{\mathrm{Li}_2\lp X^{2k} \rp}{4k^2 w} + O(1).
\end{align*}
Applying this calculation to the definition of $H^*(T;z)$ given in \eqref{E: H^* Definition} and the definition of $C_T^\pm$, we see that as $z \to \zeta_k^h$ for any root of unity with odd order, we have
\begin{align*}
	H^*(T;z) &= - \dfrac{C_T^+}{2T}\lp \dfrac{\mathrm{Li}_2\lp (1-T^2)^k \rp}{4k^2 w} \rp - \dfrac{C_T^-}{2T} \lp \dfrac{\mathrm{Li}_2\lp (1-T^2)^k \rp}{4k^2 w} \rp + O(1) \\ &= - \dfrac{\mathrm{Li}_2\lp (1-T^2)^k \rp}{4k^2 T w} + O(1).
\end{align*}
Because the odd order roots of unity are dense on the unit disk and because the regions $D_\theta$ are open intervals on the disk of radius $|z| = e^{-\alpha_0}$, we see that
\begin{align*}
	\max\limits_{\substack{k \geq 1 \\ k \text{ odd}}} \left[ - \dfrac{\mathrm{Li}_2\lp \lp 1-T^2 \rp^k \rp}{4k^2 T} \right] \dfrac{1}{w} + O(1)
\end{align*} 
gives an asymptotic upper bound on the size of $H^*(T;z)$ on the whole disk of radius $|z|$. For $\eta \leq T \leq \eta^{-1}$ the function $\lp 1-T^2 \rp^k$ is decreasing as a function of $k$ (since $k$ is odd), and by \eqref{eq: Dilog derivative} we see that $\mathrm{Li}_2\lp x \rp$ is an increasing function of $x$ on $(-\infty,1)$, and therefore $\mathrm{Li}_2\lp (1-T^2)^k \rp$ is a decreasing function of $T$ on $\eta \leq T \leq \eta^{-1}$. Because $\mathrm{Li}_2\lp x \rp \to - \infty$ as $x \to 1^-$, both numerator and denominator are optimized at $k = 1$, and so
\begin{align*}
	\max\limits_{\substack{k \geq 1 \\ k \text{ odd}}} \left[ - \dfrac{\mathrm{Li}_2\lp \lp 1-T^2 \rp^k \rp}{4k^2 T} \right] = - \dfrac{\mathrm{Li}_2\lp 1-T^2 \rp}{4T},
\end{align*}
Because we have shown that the maximal order of $H^*(T;z)$ is achieved in the region near $z=1$, it follows that
\begin{align*}
	\left| \dfrac{H^*(T;z^t)}{H^*(T;z_0^t)} \right|^2 \ll 1,
\end{align*}
and then by \eqref{E: t odd E bound} the desired result is complete for $t$ odd.

\end{proof}

\section{Proof of Theorem \ref{T: Main Theorem}} \label{S: Proof}

\subsection{Finding Mean and Variance}

Before beginning the proof of Theorem \ref{T: Main Theorem}, it is important to know the mean and variance of the distributions we consider. In particular, we show the following:

\begin{proposition} \label{P: Mean and Variance}
	The random variable $\widehat N_{t,n}$ on the space $\mathcal{SC}(n)$ has mean $\mu_t(n) \sim \frac{\sqrt{6n}}{\pi}-\frac{t}{2}+\frac{3}{\pi^2}+\frac{\delta_{t}}{4} $ and variance $\sigma_t(n)^2 \sim \frac{(\pi^2-6)\sqrt{6n}}{\pi^3}+\frac{3(\pi^2-12)}{\pi^4}-\frac{\delta_{t}}{8} $ as $n \to \infty$.
\end{proposition}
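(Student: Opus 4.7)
The plan is to extract the mean and variance from the cumulant-generating identity
$$\mu_t(n) \;=\; \partial_T \log \mathrm{sc}_t(n;T)\big|_{T=1},\qquad
\sigma_t^2(n) \;=\; \bigl(\partial_T+\partial_T^2\bigr)\log \mathrm{sc}_t(n;T)\big|_{T=1},$$
which follow from the moment-generating relation $E[e^{s\widehat N_{t,n}}]=\mathrm{sc}_t(n;e^s)/|\mathcal{SC}(n)|$. Thus it suffices to expand $\log\mathrm{sc}_t(n;T)$ about $T=1$ while maintaining $O(1)$ precision in the asymptotic in $n$.

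My first step is to refine Proposition \ref{P: coeff asymp} by tracking the constant-in-$w$ term of $\log F_t(T;e^{-w})$. Using Lemma \ref{L: (Xq;q) Lemmas} together with Lemma \ref{L: H^* Lemma}, supplemented by the algebraic identity
$$\bigl(C_T^+\sqrt{1-X}+C_T^-\sqrt{1+X}\bigr)^2 \;=\; \frac{8T^2}{1+T}\qquad\text{where } X=\sqrt{1-T^2},$$
which gives $H^*(T;q)\sim\sqrt{2/(1+T)}\,e^{-\mathrm{Li}_2(1-T^2)/(4w)}$ and identifies the constant $\tfrac12\log(2/(1+T))$ in the odd case, one obtains for both parities of $t$ the uniform expansion
$$\log F_t(T;e^{-w}) \;=\; \frac{b(T)^2}{w} \;+\; g_t(T) \;+\; O(w),$$
with $b(T):=\tfrac12\sqrt{\pi^2/6-\mathrm{Li}_2(1-T^2)}$ and $g_t(T):=-\tfrac{t-\delta_t}{2}\log T+\tfrac{\delta_t}{2}\log\tfrac{2}{1+T}$. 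Running the saddle-point argument of Proposition \ref{P: coeff asymp} with this refined exponent ($g_t(T)$ passes through the saddle additively) produces the sharpened asymptotic
$$\log\mathrm{sc}_t(n;T) \;=\; 2b(T)\sqrt{n}\;+\;g_t(T)\;+\;\tfrac12\log b(T)\;-\;\tfrac12\log(4\pi n^{3/2})\;+\;O(n^{-1/7}).$$

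The second step is routine differentiation at $T=1$. Using $\mathrm{Li}_2(0)=0$ together with $\tfrac{d}{dT}\mathrm{Li}_2(1-T^2)|_{T=1}=-2$ and $\tfrac{d^2}{dT^2}\mathrm{Li}_2(1-T^2)|_{T=1}=0$, implicit differentiation of $b(T)^2=\tfrac14(\pi^2/6-\mathrm{Li}_2(1-T^2))$ yields
$$b(1)=\tfrac{\pi}{2\sqrt{6}},\qquad b'(1)=\tfrac{\sqrt 6}{2\pi},\qquad b''(1)=-\tfrac{3\sqrt 6}{\pi^3},$$
while the explicit form of $g_t$ gives $g_t(1)=0$, $g_t'(1)=-\tfrac{t}{2}+\tfrac{\delta_t}{4}$, and $g_t''(1)=\tfrac{t}{2}-\tfrac{3\delta_t}{8}$. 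Substituting into $\mu_t(n)=2b'(1)\sqrt n+g_t'(1)+\tfrac{b'(1)}{2b(1)}+O(n^{-1/2})$, together with the identity $\tfrac{b'(1)}{2b(1)}=\tfrac{3}{\pi^2}$, gives the claimed mean. For the variance, one further adds $\phi''(1)=2b''(1)\sqrt n+g_t''(1)+\partial_T^2[\tfrac12\log b(T)]|_{T=1}+O(n^{-1/2})$ to $\mu_t(n)$; the computation $\partial_T^2[\tfrac12\log b(T)]|_{T=1}=\tfrac{b''(1)}{2b(1)}-\tfrac{(b'(1))^2}{2b(1)^2}=-\tfrac{36}{\pi^4}$, followed by arithmetic simplification, yields exactly the stated $\sigma_t^2(n)$.

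The main obstacle is Step 1 for odd $t$: identifying the constant $\tfrac12\log(2/(1+T))$ coming from $\log H^*(T;q^t)$. Because $\sqrt{1-T^2}\to 0$ at $T=1$, the two infinite products comprising $H^*$ degenerate simultaneously, so the constant-order term must be obtained via the algebraic identity above rather than simply extracted from Lemma \ref{L: H^* Lemma}. As a sanity check, the $q$-hypergeometric representation \eqref{Hypergeometric series} gives $\partial_T H^*(T;q)|_{T=1}=q(1+q^2)/\bigl((1-q)(1+q)^2\bigr)$, whose expansion $\tfrac{1}{2w}-\tfrac14+O(w)$ as $w\to 0^+$ confirms the additional $-\tfrac14$ contribution to $g_t'(1)$ responsible for the $\delta_t/4$ shift in the mean.
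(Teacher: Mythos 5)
Your strategy departs from the paper's sketch in a genuinely useful way. The paper's proof of Proposition~\ref{P: Mean and Variance} differentiates the generating function $F_t(T;q)$ at $T=1$ to obtain explicit $q$-series of the form $R_{j,t}(q)(-q;q^2)_\infty$ with $R_{j,t}$ rational, then applies Wright's circle method. You instead refine the \emph{saddle-point asymptotic} of $\mathrm{sc}_t(n;T)$ to retain the $T$-dependent constant $g_t(T)$, and then differentiate in $T$. Both arrive at the same constants, and your arithmetic is verified correct: the algebraic identity $\bigl(C_T^+\sqrt{1-X}+C_T^-\sqrt{1+X}\bigr)^2=\tfrac{8T^2}{1+T}$ (with $X=\sqrt{1-T^2}$) holds and cleanly identifies the constant $\tfrac12\log\tfrac{2}{1+T}$ in $\Log H^*$; the values $b(1)=\tfrac{\pi}{2\sqrt6}$, $b'(1)=\tfrac{\sqrt6}{2\pi}$, $b''(1)=-\tfrac{3\sqrt6}{\pi^3}$, $g_t'(1)=-\tfrac t2+\tfrac{\delta_t}{4}$, $g_t''(1)=\tfrac t2-\tfrac{3\delta_t}{8}$ are all correct; the combination $g_t'(1)+g_t''(1)=-\tfrac{\delta_t}{8}$ and the identity $\tfrac{b'(1)}{2b(1)}+\partial_T^2[\tfrac12\log b]|_{T=1}=\tfrac{3(\pi^2-12)}{\pi^4}$ give exactly the stated constants; and your hypergeometric sanity check $\partial_T H^*(T;q)|_{T=1}=\tfrac{q(1+q^2)}{(1-q)(1+q)^2}$ is a correct and clarifying verification of the $-\tfrac14$ contribution. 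A nice feature of your route is that the leading saddle-point exponent $b(T)$ comes out the same for both parities of $t$ (since the $\mathrm{Li}_2$ contributions from $H^*(T;q^t)$ and from $((1-T^2)q^{2t};q^{2t})_\infty^{(t-1)/2}$ combine to $-\tfrac{\mathrm{Li}_2(1-T^2)}{4w}$ independently of $t$), which streamlines the odd case.

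There is, however, one genuine gap in the final step: you pass from the asymptotic $\log\mathrm{sc}_t(n;T)=\Phi(n,T)+E(n,T)$ with $E(n,T)=O(n^{-1/7})$ (uniformly for $T$ in a compact subinterval of $(0,\infty)$) to $\partial_T\log\mathrm{sc}_t(n;T)|_{T=1}=\partial_T\Phi(n,1)+O(n^{-1/7})$. This does not follow: a uniform bound on $E(n,T)$ gives no control on $\partial_T E(n,T)$. Since the exact identity $\mu_t(n)=\partial_T\log\mathrm{sc}_t(n;T)|_{T=1}$ requires exactly this derivative and you need the constant $O(1)$ term to be accurate, the unbounded derivative of the error could in principle swamp the constants $\tfrac3{\pi^2}+\tfrac{\delta_t}4$ you are after. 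To repair this you would need either to establish the saddle-point asymptotic for $T$ ranging over a complex neighborhood of $1$ and then invoke a Cauchy estimate on $E$, or to apply the circle/saddle-point method directly to the $q$-series $\partial_T F_t(T;q)|_{T=1}$ and $\partial_T^2 F_t(T;q)|_{T=1}$ as the paper's sketch does — which is precisely the device that avoids differentiating an asymptotic. As written, the computation of $b$, $b'$, $b''$, $g_t'$, $g_t''$ is sound and the final numerics are correct, but the derivation is not yet rigorous without addressing this point.
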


The proof of Theorem \ref{T: Main Theorem} itself will imply Proposition \ref{P: Mean and Variance}. For convenience, we give a sketch here of another method for calculating these values which is applicable and straightforward (i.e. requires no guesswork or {\it a priori} knowledge of the solution) even if the overall distribution is unknown.

\begin{proof}[Sketch of proof for Proposition \ref{P: Mean and Variance}]
	We use the standard notation $\mathbb{E}\lp \mathrm{sc}_t\lp n, \bullet \rp \rp$ and $\mathbb{V}\lp \mathrm{sc}_t\lp n, \bullet \rp \rp$ to denote the mean and variance of $\mathrm{sc}_t\lp n, m \rp$ as $m$ varies. Recall that
	\begin{align*}
		\sum_{n \geq 0} sc(n) \mathbb{E}\lp sc_t(n,\bullet) \rp q^n = \dfrac{\partial F_t(T;q)}{\partial T}\bigg|_{T=1}
	\end{align*}
	and likewise, using the identity $\mathbb{V}(X) = \mathbb{E}(X^2) - \mathbb{E}(X)^2$ for any random variable $X$, we have
	\begin{align*}
		\sum_{n \geq 0} sc(n) \mathbb{V}\lp sc_t(n,\bullet)\rp q^n = \left[ \lp T \dfrac{\partial}{\partial T} \rp^2 F_t(T;q) - \lp T \dfrac{\partial}{\partial T} F_t(T;q) \rp^2 \right] \bigg|_{T=1}
	\end{align*}
	Therefore, the asymptotic growth of $\mathbb{E}\lp sc_t(n,\bullet) \rp$ and $\mathbb{V}\lp sc_t(n,\bullet) \rp$ as $n \to \infty$ can be calculated from the growth of $sc(n)$ as well as the growth of the Fourier coefficients of $T$-derivatives of $F_t(T;q)$. Although there are a variety of cases to consider in our application, the basic idea is the same in all cases. One can compute directly formulas for $\frac{\partial^j F_t(T;q)}{\partial T^j} \big|_{T=1}$ as a $q$-series using elementary methods\footnote{To deal with the required formulas relating to $H^*(T;q)$, it is easiest to use the $q$-hypergeometric representations found in \eqref{Hypergeometric series}.}. These representations all take the form $$\dfrac{\partial^j F_t(T;q)}{\partial T^j} \bigg|_{T=1} = R_{j,t}(q) \lp -q;q^2 \rp_\infty$$ for some rational functions $R_{j,t}(q)$. One can then derive asymptotic expansions (for $q = e^{-w}$ and $w \to 0$ in relevant regions for $$\lp -q;q^2 \rp_\infty \sim \exp\lp \frac{\pi^2}{24w}-\frac{w}{24}\rp\lp1 + O\lp\exp\lp\frac{-\pi^2}{w}\rp\rp\rp$$ using Lemma \ref{L: Asymptotic EM} and for $R_{j,t}(q)$ using Laurent expansions. After verifying certain ``minor arc" conditions, which will be automatic because $\lp -q;q^2 \rp_\infty$ is modular (see for instance \cite{BJM}), the desired formulas follow from a standard application of Wright's circle method (see for example \cite[Proposition 1.8]{NR}).
\end{proof}

\subsection{Proof of Theorem \ref{T: Main Theorem}}

We recall the method of moments, as formulated in the following classical theorem of Curtiss.

\begin{theorem} \label{T: Method of Moments}
	Let $\{ X_n \}$ be a sequence of real random variables, and define the corresponding moment generating function
	\begin{align*}
		M_{X_n}(r) := \int_{-\infty}^\infty e^{rx} dF_n(x),
	\end{align*}
	where $F_n(x)$ is the cumulative distribution function associated with $X_n$. If the sequence $\{ M_{X_n}(r) \}$ converges pointwise on a neighborhood of $r=0$, then $\{ X_n \}$ converges in distribution.
\end{theorem}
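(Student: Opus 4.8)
The plan is to reduce the statement to three standard ingredients: a Chernoff-type tail bound yielding tightness, the Helly--Prokhorov selection principle, and the fact that a probability law is determined by a moment generating function that is finite on a neighborhood of the origin. By hypothesis there is a $\delta>0$ for which $M_{X_n}(r)$ is finite on $(-\delta,\delta)$ for every $n$ and $M_{X_n}(\pm\delta)$ converges; in particular there is a constant $C$ with $M_{X_n}(\delta)+M_{X_n}(-\delta)\le C$ for all $n$. Applying Markov's inequality to $e^{\delta X_n}$ and to $e^{-\delta X_n}$ gives
\begin{align*}
\mathbb{P}\lp |X_n|>x \rp \le e^{-\delta x}\lp M_{X_n}(\delta)+M_{X_n}(-\delta) \rp \le C e^{-\delta x}
\end{align*}
for every $x>0$, uniformly in $n$, so $\{X_n\}$ is tight and in fact enjoys a uniform sub-exponential tail.

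Next I would use Prokhorov's theorem (equivalently, Helly's selection theorem together with the tightness just established): every subsequence of $\{X_n\}$ has a further subsequence $X_{n_k}$ converging in distribution to some random variable $Y$. Fix $r$ with $|r|<\delta$ and choose $\varepsilon>0$ with $(1+\varepsilon)|r|<\delta$; the tail bound shows $\sup_k \mathbb{E}\lp e^{(1+\varepsilon)rX_{n_k}} \rp<\infty$, so the family $\{e^{rX_{n_k}}\}_k$ is uniformly integrable. Combining uniform integrability with $X_{n_k}\Rightarrow Y$ yields $M_{X_{n_k}}(r)\to M_Y(r)$. But $M_{X_{n_k}}(r)\to \lim_n M_{X_n}(r)=:M(r)$ by hypothesis, so $M_Y(r)=M(r)$ for all $|r|<\delta$, independently of the chosen subsequence.

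Finally, a probability distribution whose moment generating function is finite on an interval around $0$ is uniquely determined by that function: the characteristic function $s\mapsto M_Y(is)$ extends analytically to the strip $|\Im(s)|<\delta$ and is therefore pinned down on all of $\mathbb{R}$ by its small real values $M(r)$ via analytic continuation, and the characteristic function determines the law. Hence every subsequential distributional limit of $\{X_n\}$ carries the same law, which by the standard subsequence argument forces $\{X_n\}$ itself to converge in distribution to that common limit (whose MGF is $M$).

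The crux is the passage from weak convergence $X_{n_k}\Rightarrow Y$ to convergence of exponential moments $M_{X_{n_k}}(r)\to M_Y(r)$: weak convergence alone does not transmit convergence of unbounded functionals, and it is exactly the uniform exponential tail estimate that supplies the needed uniform integrability. The uniqueness input in the last step is classical but must be cited with care, since it genuinely fails when the MGF is finite only at $r=0$.
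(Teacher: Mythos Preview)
Your argument is correct. The tightness step via the Chernoff bound, the Prokhorov selection, the uniform-integrability upgrade from weak convergence to convergence of the exponential moments, and the analytic-continuation uniqueness step are all sound and fit together properly. One small point worth making explicit: the hypothesis only guarantees that $M_{X_n}(\pm\delta)$ is finite for all sufficiently large $n$, so the uniform bound $C$ should be understood as holding for $n\ge n_0$; this does not affect tightness or any later step.

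As for comparison with the paper: the paper does not prove this theorem at all. It is stated as a classical result of Curtiss (reference \cite{Curtiss}) and invoked as a black box in the proof of Theorem~\ref{T: Main Theorem}. Your write-up therefore supplies what the paper simply cites. The route you take---tightness, Helly/Prokhorov, uniform integrability, and MGF-uniqueness---is the standard modern proof and is essentially the argument in Curtiss's original note, recast in contemporary language.
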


The proof of Theorem \ref{T: Main Theorem} follows from Theorem \ref{T: Method of Moments} along with the theory of normal distributions. In particular, let $sc_t(n,m)$ be the number of self-conjugate partitions of $n$ having exactly $m$ hooks of length $t$ (i.e. the coefficient on $T^m$ in $\mathrm{sc}_t(n;T)$), and consider the $r$th power moments
\begin{align*}
	M_t\lp \widehat N_{t,n}; r \rp := \dfrac{1}{sc(n)} \sum_{m \geq 0} sc_t(n,m) e^{\frac{\lp m - \mu_t(n) \rp r}{\sigma_t(n)}}.
\end{align*}
By Theorem \ref{T: Method of Moments} and the theory of normal distributions, we need only prove that
\begin{align*}
	\lim\limits_{n \to \infty} M\lp \widehat N_{t,n}; r \rp = e^{\frac{r^2}{2}}.
\end{align*}

By evaluating $sc_t(n;T)$ at $T=1$ and $T=e^{\frac{r}{\sigma_t(n)}}$, we have that

\begin{align*}
	M\lp \widehat N_{t,n}; r \rp = \dfrac{sc_t\lp n;e^{\frac{r}{\sigma_t(n)}} \rp}{sc(n)} e^{-\frac{\mu_t(n)}{\sigma_t(n)}}.
\end{align*}

By Proposition \ref{P: coeff asymp}, we see that
\begin{align*}
	M\lp \widehat N_{t,n}; r \rp &= \sqrt{\dfrac{b_t\lp e^{\frac{r}{\sigma_t(n)}} \rp}{b_t(1)}} e^{\lp 2\sqrt{n} - \frac{1}{\sqrt{n}} \rp \lp b_t\lp e^{\frac{r}{\sigma_t(n)}} \rp - b_t(1) \rp - \frac{\mu_t(n)}{\sigma_t(n)}} \lp 1 + O_\eta\lp n^{-\frac 17} \rp \rp.
\end{align*}
Since $e^{\frac{r}{\sigma_t(n)}} > 0$ and approaches 1 as $n \to \infty$, we can remove the dependence on $\eta$ from the implied constant. By a direct calculation of the dilogarithm function, we see that $b_t(1) = \frac{\pi}{2\sqrt{6}}$ and 
\begin{align*}
	b_t\lp e^{\frac{r}{\sigma_t(n)}} \rp = \begin{cases}
		\dfrac{\pi}{2\sqrt{6}} + \dfrac{\sqrt{\frac 32} x}{\pi} + \dfrac{\sqrt{\frac 32} \lp \pi^2 - 6 \rp x^2}{2\pi^3} + O(x^3) & 2|t, \\
		\dfrac{\pi}{2\sqrt{6}} + \dfrac{\sqrt{\frac 32} (t-1) x}{\pi t} + \dfrac{\sqrt{\frac 32} (t-1)\lp \lp \pi^2 - 6 \rp t + 6 \rp x^2}{2\pi^3 t^2} + O(x^3) & 2 \centernot | t.
	\end{cases}
\end{align*}
Therefore, we conclude quickly from the construction of $\mu_t(n)$ and $\sigma_t(n)$ that
\begin{align*}
	M\lp \widehat N_{t,n}; r \rp = e^{\frac{r^2}{2} + o_r(1)}\lp 1 + O_r\lp n^{-\frac 17} \rp \rp.
\end{align*}
By taking $n \to \infty$, Theorem \ref{T: Main Theorem} follows.

\end{document}